\documentclass[12pt]{article}
\usepackage{amsmath,amsthm, amssymb}
\usepackage{amssymb,latexsym}

\newtheorem{theorem}{Theorem}

\newtheorem{lemma}{Lemma}

\textheight=21.5cm
\textwidth=16cm
\hoffset=-1cm
\parindent=16pt

\newcommand{\bp}{\begin{proof}[Proof:]}
\newcommand{\ep}{\end{proof}}

\begin{document}

\baselineskip=17pt

\title{\bf Exponential sums over Piatetski-Shapiro primes in arithmetic progressions}

\author{\bf S. I. Dimitrov}
\date{2022}
\maketitle
\begin{abstract}
Let $\gamma<1<c$ and $19(c-1)+171(1-\gamma)<9$.
In this paper, we establish an asymptotic formula for exponential sums over
Piatetski-Shapiro primes $p=[n^{1/\gamma}]$ in arithmetic progressions.\\
\quad\\
{\bf Keywords}: Piatetski-Shapiro prime $\cdot$ Asymptotic formula $\cdot$ Arithmetic progression\\
\quad\\
{\bf  2020 Math.\ Subject Classification}: 11L07 $\cdot$  11L20 $\cdot$ 11N05 $\cdot$ 11N25
\end{abstract}

\section{Notations}
\indent

Let $x$ be a sufficiently large positive number.
The letter $p$  with or without subscript will always denote prime number.
By $\delta$ we denote a fixed positive number, it can be chosen arbitrarily small;
its value needs not be the same in all occurrences.
As usual $\Lambda(n)$ denotes von Mangoldt's function.
We use $[t]$ and $\{t\}$ to denote the integer part, respectively, the fractional part of $t$.
Moreover $e(y)=e^{2\pi i y}$ and $\psi(t)=\{t\}-1/2$.
We denote by $\tau _k(n)$ the number of solutions of the equation $m_1m_2\ldots m_k$ $=n$ in natural numbers $m_1,\,\ldots,m_k$.
Instead of $m\equiv n\,\pmod {k}$ we write for simplicity $m\equiv n\,(k)$.
Throughout this paper unless something else is said, we suppose that $\gamma<1<c$ and $19(c-1)+171(1-\gamma)<9$.
Define
\begin{align}
\label{pixdatc}
&\pi(x,d,a,t,c)=\sum\limits_{p\leq x\atop{p\equiv a\,(d)}}e(t p^c)\,;\\
\label{pigammaxdatc}
&\pi_\gamma(x,d,a,t,c)=\sum\limits_{p\leq x\atop{p=[n^{1/\gamma}]\atop{p\equiv a\,(d)}}}e(t p^c)\,.
\end{align}

\section{Introduction and statement of the result}
\indent

Let $\mathbb{P}$ denotes the set of all prime numbers.
In 1953 Piatetski-Shapiro \cite{Shapiro} has shown that for any fixed $\frac{11}{12}<\gamma< 1$ the set
\begin{equation*}
\mathbb{P}_\gamma=\{p\in\mathbb{P}\;\;|\;\; p= [n^{1/\gamma}]\;\; \mbox{ for some } n\in \mathbb{N}\}
\end{equation*}
is infinite.
The prime numbers of the form $p = [n^{1/\gamma}]$ are called Piatetski-Shapiro primes of type $\gamma$.
Denote
\begin{equation*}
\pi_\gamma(x)=\sum\limits_{p\leq x\atop{p=[n^{1/\gamma}]}}1\,.
\end{equation*}
Piatetski-Shapiro's result states that
\begin{equation}\label{Shapiroformula}
\pi_\gamma(x)=\frac{x^\gamma}{\log x}+\mathcal{O}\left(\frac{x^\gamma}{\log^2x}\right)
\end{equation}
for $\frac{11}{12}<\gamma< 1$.
The admissible range for $\gamma$ in this theorem has been extended many times over the years,
and the best results up to now belong to Rivat and Sargos \cite{Rivat-Sargos} with \eqref{Shapiroformula}
for $\frac{2426}{2817}<\gamma<1$ and to Rivat and Wu \cite{Rivat-Wu} with
\begin{equation*}
\pi_\gamma(x)\gg \frac{x^\gamma}{\log x}
\end{equation*}
for $\frac{205}{243}<\gamma<1$.
On the other hand Siegel-Walfisz theorem is extremely important result in analytic number theory
and plays a significant role in various applications.
It is a refinement both of the prime number theorem and of Dirichlet's theorem on primes in arithmetic progressions.
It states that for any fixed  $A > 0$ there exists a positive constant $c$ depending only on $A$ such that
\begin{equation}\label{Siegel-Walfisz}
\sum_{n\le x\atop{n\equiv a\,(d)}} \, \Lambda(n)
=\frac{x}{\varphi(d)}+\mathcal{O}\bigg(\frac{x}{e^{c\sqrt{\log x}}}\bigg)\,,
\end{equation}
whenever $x\geq2$, $(a,d) = 1$, $d\leq (\log x)^A$ and  $\varphi (n)$ is Euler's function.

In 2013 Baker, Banks,  Br\"{u}dern,  Shparlinski and Weingartner \cite{Baker-Banks}
have considered for first time the distribution Piatetski-Shapiro primes in arithmetic progressions.
They showed that when $a$ and $d$ are coprime integers, then for any fixed $\frac{17}{18}<\gamma<1$ we have
\begin{equation*}
\sum\limits_{p\leq x\atop{p=[n^{1/\gamma}]\atop{p\equiv a\,(d)}}}1
=\gamma x^{\gamma-1}\sum\limits_{p\leq x\atop{p\equiv a\,(d)}}1
+\gamma(1-\gamma)\int\limits_2^x y^{\gamma-2}\sum\limits_{p\leq y\atop{p\equiv a\,(d)}}1\,dy
+\mathcal{O}\bigg(x^{\frac{17}{39}+\frac{7\gamma}{13}+\delta}\bigg)\,.
\end{equation*}
In 2015 Guo \cite{Guo1}  enlarged the range of $\gamma$ to $\frac{13}{14}<\gamma<1$.
Recently Guo, Li and Zhang \cite{Guo2} achieved the best result up to now with $\frac{11}{12}<\gamma<1$.
Motivated by these results we establish an asymptotic formula for exponential sums over Piatetski-Shapiro primes
in arithmetic progressions. More precisely we prove the following theorem.
\begin{theorem}\label{Theorem} Let $a$ and $d$ be coprime integers, $d\geq1$.
Assume that $|t|\leq x^\delta$ with a sufficiently small $\delta>0$ and $\gamma<1<c$ which satisfy
\begin{equation}\label{cgamma}
\frac{19}{9}(c-1)+19(1-\gamma)<1\,.
\end{equation}
Then the sums \eqref{pixdatc} and \eqref{pigammaxdatc} are connected by the asymptotic formula
\begin{align}\label{asymptoticformula1}
\pi_\gamma(x,d,a,t,c)=\gamma x^{\gamma-1}\pi(x,d,a,t,c)
&+\gamma(1-\gamma)\int\limits_2^x y^{\gamma-2}\pi(y,d,a,t,c)\,dy\nonumber\\
&+\mathcal{O}\bigg(x^{\frac{c}{18}+\frac{\gamma}{2}+\frac{143}{342}+\delta}\bigg)\,.
\end{align}
\end{theorem}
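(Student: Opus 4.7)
The plan is to separate the Piatetski--Shapiro characteristic function into a smooth mean-value part and a $\psi$-oscillating part, extract the main term by Abel summation, and reduce the remaining error to bounding exponential sums over primes in arithmetic progressions. Since $p\in\mathbb{P}_\gamma$ iff some integer lies in $[p^\gamma,(p+1)^\gamma)$, the standard identity
\[
\mathbf{1}_{p\in\mathbb{P}_\gamma}=\lceil(p+1)^\gamma\rceil-\lceil p^\gamma\rceil=(p+1)^\gamma-p^\gamma+\psi(p^\gamma)-\psi((p+1)^\gamma)
\]
splits $\pi_\gamma(x,d,a,t,c)=M(x)+R(x)$, where
\[
M(x)=\sum_{\substack{p\le x\\p\equiv a\,(d)}}\bigl((p+1)^\gamma-p^\gamma\bigr)e(tp^c),\qquad R(x)=\sum_{\substack{p\le x\\p\equiv a\,(d)}}\bigl(\psi(p^\gamma)-\psi((p+1)^\gamma)\bigr)e(tp^c).
\]

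For $M(x)$ I would expand $(p+1)^\gamma-p^\gamma=\gamma p^{\gamma-1}+\Ob{p^{\gamma-2}}$ and apply Abel summation to the coefficients $a_p=e(tp^c)\mathbf{1}_{p\equiv a\,(d)}$ against $g(y)=\gamma y^{\gamma-1}$. Since the partial sums equal $\pi(y,d,a,t,c)$ and $g'(y)=-\gamma(1-\gamma)y^{\gamma-2}$, this yields exactly the two leading terms of \eqref{asymptoticformula1} together with a negligible $\Ob{x^{\gamma-1}}$ coming from the $\Ob{p^{\gamma-2}}$ expansion error.

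The heart of the argument is the estimation of $R(x)$. I would approximate $\psi$ by a Vaaler trigonometric polynomial truncated at some parameter $H$, producing a tail $\ll x^\gamma/H$ plus the weighted exponential sums
\[
T_h(x)=\sum_{\substack{p\le x\\p\equiv a\,(d)}}e\bigl(h\,p^\gamma+t\,p^c\bigr),\qquad 1\le|h|\le H,
\]
together with the analogous variant in which $p$ is replaced by $p+1$, whose difference from $T_h(x)$ factors through $1-e(h((p+1)^\gamma-p^\gamma))$ and is smaller by roughly a factor $|h|p^{\gamma-1}$. After inserting a $\Lambda$-weight and invoking Heath--Brown's identity (or Vaughan's identity), $T_h(x)$ reduces to Type~I sums $\sum_{m\sim M}a_m\sum_{n,\,mn\equiv a\,(d)}e(F(mn))$ and Type~II bilinear sums $\sum_{m\sim M}\sum_{n\sim N}a_m b_n e(F(mn))$ with phase $F(y)=hy^\gamma+ty^c$; the congruence $mn\equiv a\,(d)$ is handled by freezing the residue class of~$m$ and letting $n$ run through the single progression $n\equiv a\bar m\,(d)$. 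The Type~I sums succumb to the first-derivative test (Kusmin--Landau) using $F'(y)\asymp hy^{\gamma-1}+ty^{c-1}$, while the Type~II sums require the Weyl--van der Corput inequality followed by an exponent-pair estimate of the differenced exponential sum in~$n$.

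The main obstacle is the Type~II estimate: the differenced phase mixes two near-linear powers $y^\gamma$ and $y^c$ whose derivatives may partially interfere, so the exponent pair must be chosen to deliver a non-trivial bound uniformly over the ranges of $M,N$, of $|h|\le H$, and of $|t|\le x^\delta$. Optimising the van der Corput iteration, the Vaughan splitting, and the Vaaler length $H$ produces the bound $\Ob{x^{c/18+\gamma/2+143/342+\delta}}$; the denominators $18$ and $342=2\cdot 9\cdot 19$ reflect the specific exponent pair that balances the $hy^\gamma$ and $ty^c$ contributions, and the hypothesis \eqref{cgamma} is precisely equivalent to $\tfrac{c}{18}+\tfrac{143}{342}<\tfrac{\gamma}{2}$, i.e., to this error being smaller than the expected main term of size $\asymp\gamma x^\gamma/\log x$.
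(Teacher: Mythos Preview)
Your outline follows the paper's strategy closely: the split into the smooth part $M(x)$ and the $\psi$-part $R(x)$, Abel summation for the main term, Vaaler's approximation, the passage to bilinear sums via Heath--Brown's identity, and the final observation that \eqref{cgamma} is equivalent to $c/18+143/342<\gamma/2$ are all exactly what the paper does. Two specific points, however, would not go through as written.

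First, the Vaaler tail is not $\ll x^\gamma/H$. The error in Vaaler's lemma is a nonnegative trigonometric polynomial $\sum_{|h|\le H}b(h)e(h\cdot)$ applied separately to $p^\gamma$ and to $(p+1)^\gamma$; these two errors do \emph{not} difference against each other the way the main Fourier coefficients do. The $h=0$ contribution alone is $\ll H^{-1}\sum_{n\le x}\Lambda(n)\ll x/H$, and the terms with $1\le|h|\le H$ still require a nontrivial exponential-sum estimate (the paper's $\Gamma_{11}$). It is only the \emph{main} Vaaler terms that pick up the factor $1-e\big(h((p+1)^\gamma-p^\gamma)\big)\ll |h|p^{\gamma-1}$. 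Getting this wrong changes the optimisation in $H$ and would not yield the stated exponent.

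Second, the Type~I sums do not succumb to Kusmin--Landau here. With $|t|$ allowed up to $x^\delta$, the first derivative of the phase $F(y)=hy^\gamma+ty^c$ satisfies $|F'(y)|\asymp |t|y^{c-1}+|h|y^{\gamma-1}$, which can be of size $x^{c-1+\delta}\gg1$; you cannot control $\|F'\|$ uniformly. The paper instead detects the congruence via additive characters modulo $d$ (which adds a harmless linear term to the phase), then splits according to whether $|t|\le x^{\gamma-c-\delta}$ or $x^{\gamma-c-\delta}\le|t|\le x^\delta$. In the first regime the $t$-term is dominated in the second and third derivatives and one quotes the Guo--Li--Zhang estimate; in the second regime the Type~I sums are handled by the third-derivative test (Sargos), and the Type~II sums by Cauchy plus one Weyl shift followed by the van~der~Corput second-derivative bound. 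Your ``exponent pair'' language for Type~II is compatible with this, but the Type~I step needs a derivative test of order $\ge2$, not Kusmin--Landau.
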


\section{Preliminary lemmas}
\indent

\begin{lemma}\label{Vaaler}
For every $H\geq1$, we have
\begin{equation*}
\psi(t)=\sum\limits_{1\leq|h|\leq H}a(h)e(ht)+\mathcal{O}\Bigg(\sum\limits_{|h|\leq H}b(h)e(ht)\Bigg)\,,
\end{equation*}
where
\begin{equation}\label{bh}
a(h)\ll\frac{1}{|h|}\,,\quad b(h)\ll\frac{1}{H}\,.
\end{equation}
\end{lemma}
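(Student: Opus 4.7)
The plan is to carry out the explicit construction due to Vaaler, which produces a trigonometric polynomial approximation of the sawtooth $\psi(t)=\{t\}-\tfrac12$ in which the Fourier coefficients decay like $1/|h|$ while the pointwise error is dominated by a non-negative trigonometric polynomial of degree $\le H$ whose own coefficients are uniformly $\ll 1/H$.

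The starting point is Beurling's extremal function: an entire function $B(z)$ of exponential type $2\pi$ with $B(x)\ge\mathrm{sgn}(x)$ on $\mathbb{R}$, $\int_{-\infty}^{\infty}(B(x)-\mathrm{sgn}(x))\,dx=1$, and Fourier transform supported in $[-1,1]$. Its antisymmetrization $D(z)=\tfrac12(B(z)-B(-z))$ then satisfies the envelope $|D(x)-\tfrac12\mathrm{sgn}(x)|\le\tfrac12 K(x)$, where $K(z)=\bigl((\sin\pi z)/(\pi z)\bigr)^2$, together with the Fourier-side bound $|\widehat D(\xi)|\le 1/(2\pi|\xi|)$ for $0<|\xi|\le 1$. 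These two properties are the technical heart of Vaaler's construction and follow from an explicit series representation of $B$ combined with a residue calculation.

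Once $D$ is in hand, I would scale $z\mapsto (H+1)t$ and periodize. Poisson summation, together with the support condition on $\widehat B$, turns the $1$-periodization of $D((H+1)\cdot)/(H+1)$ into a trigonometric polynomial of degree $\le H$, which (up to a sign) approximates $\psi$:
\[
V_H(t)=\sum_{1\le|h|\le H}a(h)\,e(ht),\qquad a(h)=\frac{1}{H+1}\,\widehat D\!\left(\frac{h}{H+1}\right),
\]
and the bound on $\widehat D$ gives $|a(h)|\ll 1/|h|$. The pointwise envelope, after the same scaling and periodization, becomes the Fej\'er kernel
\[
F_H(t)=\sum_{|h|\le H}\!\left(1-\frac{|h|}{H+1}\right)e(ht)=\sum_{|h|\le H}b(h)\,e(ht),\qquad b(h)\ll 1/H,
\]
which is non-negative and of degree $\le H$. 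Combining these two assertions produces exactly the decomposition in the statement.

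The main obstacle is establishing the two analytic facts about $D$ (the envelope $|D-\tfrac12\mathrm{sgn}|\le\tfrac12 K$ and the Fourier bound $|\widehat D(\xi)|\ll 1/|\xi|$); once these are secured, the periodization is a routine application of Poisson summation, justified by the $1/z^2$ decay of the relevant functions at infinity. In practice, the cleanest route is simply to quote Vaaler's original result (\emph{Some extremal functions in Fourier analysis}, 1985), where all of these ingredients are assembled.
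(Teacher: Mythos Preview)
Your proposal is correct and in fact goes well beyond what the paper does: the paper's entire proof is the single line ``See \cite{Vaaler}'', whereas you sketch the underlying Beurling--Vaaler construction before noting that a citation suffices. Since both routes ultimately appeal to the same source, your approach is essentially the same as the paper's, only more informative.
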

\begin{proof}
See \cite{Vaaler}.
\end{proof}

\begin{lemma}\label{Korput}
Suppose that $f''(t)$ exists, is continuous on $[a,b]$ and satisfies
\begin{equation*}
f''(t)\asymp\lambda\quad(\lambda>0)\quad\mbox{for}\quad t\in[a,b]\,.
\end{equation*}
Then
\begin{equation*}
\bigg|\sum_{a<n\le b}e(f(n))\bigg|\ll(b-a)\lambda^\frac{1}{2}+\lambda^{-\frac{1}{2}}\,.
\end{equation*}
\end{lemma}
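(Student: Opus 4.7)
The plan is to prove the classical van der Corput second-derivative bound by combining the monotonicity of $f'$ (forced by the constant sign of $f''$) with the Kusmik--Landau estimate for exponential sums whose phase has a monotone derivative bounded away from integers. Without loss of generality assume $f''>0$ on $[a,b]$, so that $f'$ is strictly increasing; put $M=f'(b)-f'(a)$, which by the mean-value theorem satisfies $M\asymp(b-a)\lambda$.

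The first step is to split the integers in $(a,b]$ according to the integer part of $f'(n)$. For each integer $k$ with $\lceil f'(a)\rceil\le k\le\lfloor f'(b)\rfloor$, define $I_k=\{n\in(a,b]:k\le f'(n)<k+1\}$; because $f''\asymp\lambda$, each $I_k$ is an interval of length $\asymp 1/\lambda$ and the number of non-empty $I_k$ is $\asymp M+1$. On $I_k$ rewrite $e(f(n))=e(g_k(n))$ with $g_k(n)=f(n)-kn$, so that $g_k'(n)=f'(n)-k\in[0,1)$ and $g_k''(n)=f''(n)\asymp\lambda$; in particular $g_k'$ is monotone on $I_k$.

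The second step introduces a parameter $\Delta\in(0,1/2)$ and further divides $I_k$ into a ``near-integer'' portion where $g_k'(n)<\Delta$ or $g_k'(n)>1-\Delta$ (of length $\asymp\Delta/\lambda$ and bounded trivially by $\Delta/\lambda$) and a ``middle'' portion where $g_k'(n)\in[\Delta,1-\Delta]$. On the middle portion $g_k'$ is monotone with $\|g_k'(n)\|\ge\Delta$, so the Kusmik--Landau lemma yields $O(1/\Delta)$. Hence each $I_k$ contributes $O(\Delta/\lambda+1/\Delta)$.

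Summing over the $\asymp M+1$ intervals one obtains $|S|\ll(M+1)(\Delta/\lambda+1/\Delta)\ll(b-a)\Delta+(b-a)\lambda/\Delta+\Delta/\lambda+1/\Delta$. Balancing with $\Delta=\lambda^{1/2}$ (valid when $\lambda\le 1/4$; larger $\lambda$ is absorbed by the trivial bound $|S|\le b-a$) delivers the claimed $(b-a)\lambda^{1/2}+\lambda^{-1/2}$. The main technical hurdle is invoking Kusmik--Landau correctly and verifying that the near-integer and middle pieces of $I_k$ really are sub-intervals (which they are, by monotonicity of $g_k'$); the degenerate cases $M<1$ or $\lambda>1/4$ need only a short separate argument.
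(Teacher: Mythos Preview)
Your argument is correct and is precisely the standard proof of the van der Corput second-derivative estimate: split the range according to $\lfloor f'(n)\rfloor$, subtract the appropriate integer from the phase, then apply the Kusmin--Landau bound on the portion of each sub-interval where $\|g_k'\|\ge\Delta$ and the trivial bound on the short remaining pieces, finally optimising in $\Delta$. The minor imprecisions (the $+O(1)$ when counting integers on the near-integer pieces, and the possibly truncated first and last intervals $I_k$) are harmless because the extra $O(1)$ per interval is absorbed by the $1/\Delta$ term, and there are only $O(M+1)$ intervals. One small correction: the lemma is due to Kusmin (sometimes transliterated Kuzmin), not ``Kusmik''.

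As for comparison with the paper: the paper does not give its own proof but simply cites Titchmarsh, \emph{The Theory of the Riemann Zeta-function}, Theorem~5.9. The argument in Titchmarsh is exactly the one you have sketched (Kusmin--Landau plus the decomposition by integer parts of $f'$), so your write-up is in effect supplying the details behind that citation rather than offering an alternative route.
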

\begin{proof}
See (\cite{Titchmarsh}, Theorem 5.9).
\end{proof}

\begin{lemma}\label{Sargos}
Suppose that $f'''(t)$ exists, is continuous on $[a,b]$ and satisfies
\begin{equation*}
f'''(t)\asymp\lambda\quad(\lambda>0)\quad\mbox{for}\quad t\in[a,b]\,.
\end{equation*}
Then
\begin{equation*}
\bigg|\sum_{a<n\le b}e(f(n))\bigg|\ll(b-a)\lambda^\frac{1}{6}+\lambda^{-\frac{1}{3}}\,.
\end{equation*}
\end{lemma}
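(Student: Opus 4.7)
The plan is to reduce the third-derivative estimate to the second-derivative estimate of Lemma \ref{Korput} through a single application of the Weyl--van der Corput (A-process) inequality, followed by optimization in an auxiliary parameter $H$. Write $S=\sum_{a<n\le b}e(f(n))$ and $N=b-a$, and fix an integer $H$ with $1\le H\le N$ to be chosen later. The Weyl--van der Corput inequality produces
\begin{equation*}
|S|^2\ll\frac{N^2}{H}+\frac{N}{H}\sum_{h=1}^{H}|S_h|,\qquad S_h=\sum_{n\in I_h}e\bigl(f(n+h)-f(n)\bigr),
\end{equation*}
where $I_h\subseteq(a,b-h]$ is the natural shifted subinterval.

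Next I would bound each $S_h$ using Lemma \ref{Korput}. Setting $g_h(t)=f(t+h)-f(t)$, the mean value theorem gives $g_h''(t)=f''(t+h)-f''(t)=hf'''(\xi)$ for some $\xi\in[t,t+h]$, so the hypothesis $f'''\asymp\lambda$ forces $g_h''(t)\asymp h\lambda$ uniformly on $I_h$. Applying Lemma \ref{Korput} with second-derivative parameter $h\lambda$ yields $|S_h|\ll N(h\lambda)^{1/2}+(h\lambda)^{-1/2}$. Summing over $1\le h\le H$ and inserting the result into the A-process bound produces
\begin{equation*}
|S|^2\ll\frac{N^2}{H}+N^2H^{1/2}\lambda^{1/2}+NH^{-1/2}\lambda^{-1/2}.
\end{equation*}

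Finally, I would balance the first two terms by choosing $H\asymp\lambda^{-1/3}$. With this choice all three terms collapse to $N^2\lambda^{1/3}+N\lambda^{-1/3}$, giving $|S|\ll N\lambda^{1/6}+N^{1/2}\lambda^{-1/6}$; an AM-GM on the last term, using $N^{1/2}\lambda^{-1/6}=(N\lambda^{1/6})^{1/2}(\lambda^{-1/3})^{1/2}$, yields the desired $N\lambda^{1/6}+\lambda^{-1/3}$. The main bookkeeping hurdle will be the boundary regime in which the optimal $H=\lambda^{-1/3}$ falls outside $[1,N]$: if $\lambda^{-1/3}>N$ then $\lambda^{-1/3}$ already exceeds the trivial estimate $|S|\le N$, and if $\lambda\ge 1$ then $N\lambda^{1/6}\ge N$, so in both edge cases the trivial bound absorbs the claim. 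The only genuine step requiring care is confirming that $g_h''$ has constant order $h\lambda$ on each $I_h$, which is immediate from the continuity of $f'''$ on $[a,b]$ combined with the control $h\le H$.
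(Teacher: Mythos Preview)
Your approach via a single Weyl--van der Corput differencing followed by Lemma~\ref{Korput} is the classical route to the third-derivative test, and up to the line $|S|\ll N\lambda^{1/6}+N^{1/2}\lambda^{-1/6}$ the argument is correct. The gap is in the final step: the AM--GM identity you invoke is false, since
\begin{equation*}
(N\lambda^{1/6})^{1/2}(\lambda^{-1/3})^{1/2}=N^{1/2}\lambda^{1/12-1/6}=N^{1/2}\lambda^{-1/12},
\end{equation*}
not $N^{1/2}\lambda^{-1/6}$. More importantly, no correct AM--GM can repair this, because the secondary term $N^{1/2}\lambda^{-1/6}$ is \emph{not} dominated by $N\lambda^{1/6}+\lambda^{-1/3}$ in general. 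For instance, with $\lambda=N^{-2}$ one has $N^{1/2}\lambda^{-1/6}=N^{5/6}$ while $N\lambda^{1/6}+\lambda^{-1/3}\asymp N^{2/3}$; throughout the range $N^{-3}<\lambda<N^{-3/2}$ the Sargos bound is strictly sharper than the classical van~der~Corput bound you have derived. Thus the A-process alone cannot deliver the inequality stated in the lemma.

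The paper does not give its own argument here; it simply quotes Corollary~4.2 of Sargos~\cite{Sargos}, whose proof relies on a genuinely different technique (counting integer points near a curve, together with the short-sum/exponent-pair machinery of that paper) rather than a bare van~der~Corput iteration. If you want a self-contained proof of the lemma as stated, you must reproduce that argument. Alternatively you could prove only the weaker classical bound $N\lambda^{1/6}+N^{1/2}\lambda^{-1/6}$, but then you would have to recheck the applications in Lemmas~\ref{SIest} and~\ref{Gamma10est2}, where the $\lambda^{-1/3}$ term feeds directly into the exponents that are later optimized.
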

\begin{proof}
See (\cite{Sargos}, Corollary 4.2).
\end{proof}

\begin{lemma}\label{Squareoutlemma}
Let $I$ be a subinterval of $(X, 2X]$.   For any complex numbers $z(n)$ we have
\begin{equation*}
\bigg|\sum_{n\in I}z(n)\bigg|^2
\leq\bigg(1+\frac{X}{Q}\bigg)\sum_{|q|< Q}\bigg(1-\frac{|q|}{Q}\bigg)
\sum_{n,\, n+q\,\in I}z(n+q)\overline{z(n)}\,,
\end{equation*}
where $Q\geq1$.
\end{lemma}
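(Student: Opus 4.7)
The plan is to realize $\sum_{n\in I}z(n)$ as an average of windowed block sums of length $Q$, then apply Cauchy--Schwarz to move the square inside, and finally count overlaps to collect the diagonal. This is a classical ``square-out'' argument of van der Corput / Gallagher type.

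First I would use the elementary counting identity
\begin{equation*}
Q\sum_{n\in I}z(n)=\sum_{m\in\mathbb{Z}}\;\sum_{\substack{n\in I\\ m<n\leq m+Q}}z(n),
\end{equation*}
which holds because for each fixed integer $n$ there are exactly $Q$ integers $m$ with $m<n\leq m+Q$. Since $I\subset(X,2X]$, the outer index $m$ contributes nontrivially only when $(m,m+Q]\cap I\neq\varnothing$, and the set of such $m$ is an interval of length at most $|I|+Q-1\leq X+Q$. Applying Cauchy--Schwarz to this restricted $m$-range then yields
\begin{equation*}
Q^2\bigg|\sum_{n\in I}z(n)\bigg|^2\leq(X+Q)\sum_{m\in\mathbb{Z}}\bigg|\sum_{\substack{n\in I\\ m<n\leq m+Q}}z(n)\bigg|^2.
\end{equation*}

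Next I would expand the inner square as a double sum over pairs $(n_1,n_2)\in(I\cap(m,m+Q])^2$, introduce the difference variable $q=n_1-n_2$ with $|q|<Q$, and interchange the $m$-summation with the $(n,q)$-summation (writing $n=n_2$). For a fixed admissible pair $(n,q)$ with $n,n+q\in I$, the integers $m$ for which both $n$ and $n+q$ lie in $(m,m+Q]$ are exactly those with $\max(n,n+q)-Q\leq m<\min(n,n+q)$, of which there are precisely $Q-|q|$. Inserting this count gives
\begin{equation*}
\sum_{m\in\mathbb{Z}}\bigg|\sum_{\substack{n\in I\\ m<n\leq m+Q}}z(n)\bigg|^2=\sum_{|q|<Q}(Q-|q|)\sum_{n,\,n+q\in I}z(n+q)\overline{z(n)}.
\end{equation*}
Combining the two displays and dividing by $Q^2$ rewrites the prefactor as
$\frac{X+Q}{Q^2}(Q-|q|)=\bigl(1+\frac{X}{Q}\bigr)\bigl(1-\frac{|q|}{Q}\bigr)$, which is the stated inequality.

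The argument is entirely elementary, and there is no real obstacle; everything reduces to two bookkeeping facts, namely that each $n\in I$ is covered by exactly $Q$ shifts $m$ (used in the initial identity) and each pair $(n,n+q)$ with $n,n+q\in I$ is simultaneously covered by exactly $Q-|q|$ shifts (used after expanding the square). The only minor care needed is to bound the support of $m$ by $X+Q$, using that $I\subset(X,2X]$.
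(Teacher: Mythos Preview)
Your argument is correct and is exactly the classical Weyl--van der Corput smoothing proof. The paper does not actually prove this lemma; it simply cites Heath-Brown (\cite{Heath2}, Lemma~5), and what you have written is precisely the standard proof one finds there. One small remark: your counting identity ``exactly $Q$ integers $m$ with $m<n\le m+Q$'' and the overlap count $Q-|q|$ tacitly assume $Q$ is a positive integer, whereas the lemma is stated for real $Q\ge 1$; this is harmless, since applying your inequality with $\lfloor Q\rfloor$ and noting $1+\tfrac{X}{\lfloor Q\rfloor}\le 2\bigl(1+\tfrac{X}{Q}\bigr)$, $1-\tfrac{|q|}{\lfloor Q\rfloor}\le 1-\tfrac{|q|}{Q}$ recovers the real-$Q$ statement up to an absolute constant, which is all that is ever used.
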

\begin{proof}
See (\cite{Heath2}, Lemma 5).
\end{proof}

\begin{lemma}\label{Heath-Brown} Let $G(n)$ be a complex valued function.
Assume further that
\begin{align*}
&P>2\,,\quad P_1\le 2P\,,\quad  2\le U<V\le Z\le P\,,\\
&U^2\le Z\,,\quad 128UZ^2\le P_1\,,\quad 2^{18}P_1\le V^3\,.
\end{align*}
Then the sum
\begin{equation*}
\sum\limits_{P<n\le P_1}\Lambda(n)G(n)
\end{equation*}
can be decomposed into $O\Big(\log^6P\Big)$ sums, each of which is either of Type I
\begin{equation*}
\mathop{\sum\limits_{M<m\le M_1}a(m)\sum\limits_{L<l\le L_1}}_{P<ml\le P_1}G(ml)
\end{equation*}
and
\begin{equation*}
\mathop{\sum\limits_{M<m\le M_1}a(m)\sum\limits_{L<l\le L_1}}_{P<ml\le P_1}G(ml)\log l\,,
\end{equation*}
where
\begin{equation*}
L\ge Z\,,\quad M_1\le 2M\,,\quad L_1\le 2L\,,\quad a(m)\ll \tau _5(m)\log P
\end{equation*}
or of Type II
\begin{equation*}
\mathop{\sum\limits_{M<m\le M_1}a(m)\sum\limits_{L<l\le L_1}}_{P<ml\le P_1}b(l)G(ml)
\end{equation*}
where
\begin{equation*}
U\le L\le V\,,\quad M_1\le 2M\,,\quad L_1\le 2L\,,\quad
a(m)\ll \tau _5(m)\log P\,,\quad b(l)\ll \tau _5(l)\log P\,.
\end{equation*}
\end{lemma}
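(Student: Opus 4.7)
The plan is to combine Heath-Brown's combinatorial identity for $\Lambda(n)$ with a dyadic decomposition of the resulting multi-sums, and then sort the pieces into Type I and Type II according to the sizes of their variables. The numerical hypotheses $U^{2}\leq Z$, $128UZ^{2}\leq P_{1}$, and $2^{18}P_{1}\leq V^{3}$ are calibrated precisely to make this sorting exhaustive.

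First I would apply Heath-Brown's identity with $k=3$ at threshold $z=P^{1/3}$: for $P<n\leq P_{1}\leq 2P$ one has
\begin{equation*}
\Lambda(n)=\sum_{j=1}^{3}(-1)^{j-1}\binom{3}{j}\sum_{\substack{n=n_{1}\cdots n_{2j}\\ n_{j+1},\ldots,n_{2j}\leq z}}(\log n_{1})\,\mu(n_{j+1})\cdots\mu(n_{2j}).
\end{equation*}
Inserting this into $\sum_{P<n\leq P_{1}}\Lambda(n)G(n)$ yields multi-sums of dimension $2$, $4$, and $6$. A dyadic decomposition $N_{i}<n_{i}\leq 2N_{i}$ on each variable then splits every multi-sum into $O(\log^{6}P)$ subsums with $\prod_{i}N_{i}\asymp P$. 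Call a variable \emph{bounded} if it carries a $\mu$-weight (then $N_{i}\leq z=P^{1/3}$) and \emph{free} otherwise.

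I would then classify each subsum. If some free variable has $N_{i}\geq Z$, I would take it as the outer index $l$ and bundle the remaining at most five variables into $m$; their $\mu$-weights together with the possibly absorbed $\log n_{1}$ give $a(m)\ll\tau_{5}(m)\log P$, and the subsum becomes Type I, with the ``$\log l$'' variant appearing precisely when $n_{1}$ itself is chosen as $l$. Otherwise every free variable has $N_{i}<Z$, and I would greedily accumulate variables into $l$ until the running product first exceeds $U$, collecting the rest into $m$; since each of $l$ and $m$ now combines at most five of the original variables, $a(m),b(l)\ll\tau_{5}(\cdot)\log P$.

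The main obstacle is showing that the greedy construction in the Type II regime actually lands inside $[U,V]$, and that the two cases are jointly exhaustive. Each candidate factor entering the greedy process has size at most $\max(z,Z)$, and the hypotheses $U^{2}\leq Z$ together with $128UZ^{2}\leq P_{1}$ force $z\leq Z$, so the last multiplication overshoots the threshold $U$ by at most a factor of $Z$, giving $L\leq UZ$. The remaining inequality $2^{18}P_{1}\leq V^{3}$ combined with $128UZ^{2}\leq P_{1}$ then yields $UZ\leq V$. Finally, the size constraint $\prod_{i}N_{i}\asymp P\gtrsim UZ^{2}$ prevents every available factor from being too small to drive the partial product past $U$, which is what excludes a third case and completes the dichotomy. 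Assembling the at most $O(\log^{6}P)$ pieces produced this way gives the stated decomposition.
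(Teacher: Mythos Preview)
The paper does not prove this lemma at all; its proof is the single line ``See \cite{Heath1}.'' Your outline follows the standard architecture of Heath-Brown's original argument --- apply the identity with $k=3$, dyadically split the six variables, then sort the resulting $O(\log^{6}P)$ pieces into Type I and Type II by size --- so in spirit you are reproducing what the citation points to rather than offering an alternative route.

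There is, however, a concrete error in your Type II sorting. You assert that $2^{18}P_{1}\le V^{3}$ combined with $128UZ^{2}\le P_{1}$ yields $UZ\le V$, and use this to cap the overshoot of your greedy accumulation. But the hypotheses include $V\le Z$ and $U\ge 2$, so $UZ\ge 2Z\ge 2V>V$ \emph{always}; the inequality $UZ\le V$ is simply false. (For the concrete choice used later in the paper, $U\asymp x^{(56-38c)/171}$, $V\asymp x^{1/3}$, $Z=x^{(38c+115)/342}$, one has $UZ\asymp x^{1/2+\cdots}\gg V$.) A similar slip occurs where you say $U^{2}\le Z$ and $128UZ^{2}\le P_{1}$ force $z\le Z$; those two inequalities bound $Z$ from \emph{above}, not below, and the correct justification of $z\le Z$ comes instead from $Z\ge V\ge(2^{18}P_{1})^{1/3}>P^{1/3}=z$.

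The upshot is that a single greedy pass that stops once the running product exceeds $U$ is not enough to land inside $[U,V]$. Heath-Brown's actual combinatorics is more delicate: one must exploit simultaneously the chain of partial products and its complementary chain (equivalently, allow the role of $L$ and $M$ to swap), together with all three numerical constraints, to show that \emph{some} subset of the at most six dyadic ranges has product in $[U,V]$ whenever none individually reaches $Z$. Your sketch has the right skeleton, but this sorting step needs to be redone along those lines before the argument closes.
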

\begin{proof}
See (\cite{Heath1}).
\end{proof}

\begin{lemma}\label{Optimization}
Let
\begin{equation*}
L(H)=\sum\limits_{i=1}^{m}A_iH^{a_i}+\sum\limits_{j=1}^{n}B_jH^{-b_j}\,,
\end{equation*}
where $A_i$, $B_j$, $a_i$ and $b_j$ are positive. Assume further that $H_1\leq H_2$.
Then there exists $H_0\in [H_1, H_2]$ such that
\begin{equation*}
L(H_0)\ll\sum\limits_{i=1}^{m}A_iH_1^{a_i}+\sum\limits_{j=1}^{n}B_jH_2^{-b_j}
+\sum\limits_{i=1}^{m}\sum\limits_{j=1}^{n}\Big(A_i^{b_j}B_j^{a_i}\Big)^{\frac{1}{a_i+b_j}}\,.
\end{equation*}
\end{lemma}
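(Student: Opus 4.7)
The plan is to exploit monotonicity. I would set $F(H)=\sum_{i=1}^{m}A_{i}H^{a_{i}}$, which is strictly increasing in $H$, and $G(H)=\sum_{j=1}^{n}B_{j}H^{-b_{j}}$, which is strictly decreasing, so that $L(H)=F(H)+G(H)$. The argument then splits into three cases according to how $F$ and $G$ compare on the interval $[H_{1},H_{2}]$.

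If $F(H_{1})\ge G(H_{1})$, I will take $H_{0}=H_{1}$, giving $L(H_{0})\le 2F(H_{1})=2\sum_{i}A_{i}H_{1}^{a_{i}}$. Symmetrically, if $G(H_{2})\ge F(H_{2})$, I will take $H_{0}=H_{2}$ to obtain $L(H_{0})\le 2\sum_{j}B_{j}H_{2}^{-b_{j}}$. In both boundary cases the bound is absorbed by the first two sums on the right-hand side. In the remaining case $F(H_{1})<G(H_{1})$ and $F(H_{2})>G(H_{2})$, so, since $F-G$ is continuous, the intermediate value theorem produces an interior point $H_{0}\in(H_{1},H_{2})$ with $F(H_{0})=G(H_{0})$, and therefore $L(H_{0})=2F(H_{0})$. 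It remains to estimate $F(H_{0})$.

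The key elementary identity is that, for any $H>0$ and any pair of indices $i,j$,
\begin{equation*}
(A_{i}H^{a_{i}})^{b_{j}}(B_{j}H^{-b_{j}})^{a_{i}}=A_{i}^{b_{j}}B_{j}^{a_{i}},
\end{equation*}
which, after taking the $(a_{i}+b_{j})$-th root of the smaller factor, yields
\begin{equation*}
\min\bigl(A_{i}H^{a_{i}},\,B_{j}H^{-b_{j}}\bigr)\le \bigl(A_{i}^{b_{j}}B_{j}^{a_{i}}\bigr)^{1/(a_{i}+b_{j})}.
\end{equation*}
I would apply this at $H=H_{0}$. For each fixed $i$ there is a dichotomy: either some $j=j_{0}$ satisfies $A_{i}H_{0}^{a_{i}}\le B_{j_{0}}H_{0}^{-b_{j_{0}}}$, in which case the minimum equals $A_{i}H_{0}^{a_{i}}$ and is directly bounded by $(A_{i}^{b_{j_{0}}}B_{j_{0}}^{a_{i}})^{1/(a_{i}+b_{j_{0}})}$; or for every $j$ the reverse inequality holds, so $B_{j}H_{0}^{-b_{j}}\le (A_{i}^{b_{j}}B_{j}^{a_{i}})^{1/(a_{i}+b_{j})}$ for all $j$, and summing over $j$ together with the balance equation $F(H_{0})=G(H_{0})$ gives $A_{i}H_{0}^{a_{i}}\le \sum_{j}(A_{i}^{b_{j}}B_{j}^{a_{i}})^{1/(a_{i}+b_{j})}$.

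Summing the resulting bound over $i$ yields the triple sum in the stated estimate, and the three cases together prove the lemma. The only mild obstacle I expect is to organise the case distinction cleanly and to verify the weighted AM--GM identity above; no deeper analytic input beyond continuity of $F-G$ is required.
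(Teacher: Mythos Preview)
Your argument is correct. The three-case split via the monotonicity of $F$ and $G$ is clean, the identity $(A_iH^{a_i})^{b_j}(B_jH^{-b_j})^{a_i}=A_i^{b_j}B_j^{a_i}$ is exactly what is needed, and your dichotomy for each $i$ goes through: in the second sub-case the step $A_iH_0^{a_i}\le F(H_0)=G(H_0)\le\sum_j(A_i^{b_j}B_j^{a_i})^{1/(a_i+b_j)}$ is the point that deserves to be written out explicitly, and you have it.

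As for comparison with the paper: the paper does not prove this lemma at all but simply cites Srinivasan (\emph{Acta Arith.}~\textbf{8} (1963), Lemma~3). Your self-contained elementary argument is precisely the kind of proof that reference contains, so there is nothing further to contrast.
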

\begin{proof}
See (\cite{Srinivasan}, Lemma 3).
\end{proof}

\section{Proof of the theorem}
\indent

From \eqref{pigammaxdatc} we have
\begin{equation}\label{Gamma12}
\pi_\gamma(x,d,a,t,c)=\sum\limits_{p\leq x\atop{p\equiv a\,( d)}}
\big([-p^\gamma]-[-(p+1)^\gamma]\big)e(t p^c)=\Gamma_1(x)+\Gamma_2(x)\,,
\end{equation}
where
\begin{align}
\label{Gamma1}
&\Gamma_1(x)=\sum\limits_{p\leq x\atop{p\equiv a\,( d)}}\big((p+1)^\gamma-p^\gamma\big)e(t p^c)\,,\\
\label{Gamma2}
&\Gamma_2(x)=\sum\limits_{p\leq x\atop{p\equiv a\,( d)}}\big(\psi(-(p+1)^\gamma)-\psi(-p^\gamma)\big)e(t p^c)\,.
\end{align}

\subsection{Asymptotic formula for $\mathbf{\Gamma_1(x)}$}
\indent

Using \eqref{Gamma1}, the well-known asymptotic formula
\begin{equation*}
(p+1)^\gamma-p^\gamma=\gamma p^{\gamma-1}+\mathcal{O}\left(p^{\gamma-2}\right)
\end{equation*}
and Abel's summation formula we write

\begin{align}\label{Gamma1est1}
\Gamma_1(x)&=\gamma\sum\limits_{p\leq x\atop{p\equiv a\,( d)}}p^{\gamma-1}e(t p^c)+\mathcal{O}(1)\nonumber\\
&=\gamma x^{\gamma-1}\pi(x,d,a,t,c)
+\gamma(1-\gamma)\int\limits_2^x\left(\sum\limits_{p\leq y\atop{p\equiv a\,( d)}}e(t p^c)\right)y^{\gamma-2}\,dy+\mathcal{O}(1)\nonumber\\
&=\gamma x^{\gamma-1}\pi(x,d,a,t,c)+\gamma(1-\gamma)\int\limits_2^x y^{\gamma-2}\pi(y,d,a,t,c)\,dy+\mathcal{O}(1)\,.
\end{align}

\subsection{Upper bound for $\mathbf{\Gamma_2(x)}$}
\indent

Define
\begin{align}
\label{Gamma3}
&\Gamma_3(x)=\sum\limits_{p\leq x\atop{p\equiv a\,( d)}}(\log p)\big(\psi(-(p+1)^\gamma)-\psi(-p^\gamma)\big)e(t p^c)\,,\\
\label{Gamma4}
&\Gamma_4(x)=\sum\limits_{n\leq x\atop{n\equiv a\,( d)}}\Lambda(n)\big(\psi(-(n+1)^\gamma)-\psi(-n^\gamma)\big)e(t n^c)\,.
\end{align}
On the one hand  \eqref{Gamma2}, \eqref{Gamma3} and Abel's summation formula yield
\begin{equation}\label{Gamma23}
\Gamma_2(x)=\frac{\Gamma_3(x)}{\log x}+\int\limits_2^x \frac{\Gamma_3(y)}{y\log^2y}\,dy\,.
\end{equation}
On the other hand \eqref{Gamma3} and \eqref{Gamma4} give us
\begin{equation}\label{Gamma34}
\Gamma_3(x)=\Gamma_4(x)+\mathcal{O}\big(\sqrt{x}\big)\,.
\end{equation}
Splitting the range of $n$  into dyadic subintervals of the form $(x/2, x]$ from \eqref{Gamma4} we get
\begin{equation}\label{Gamma45}
\Gamma_4(x)\ll(\log x)\big|\Gamma_5(x)\big|\,,
\end{equation}
where
\begin{equation}\label{Gamma5}
\Gamma_5(x)=\sum\limits_{x/2<n\leq x\atop{n\equiv a\,( d)}}\Lambda(n)\big(\psi(-(n+1)^\gamma)-\psi(-n^\gamma)\big)e(t n^c)\,.
\end{equation}
Using \eqref{Gamma5} and Lemma \ref{Vaaler} we obtain
\begin{align}\label{Gamma5678}
\Gamma_5(x)=\Gamma_6(x)+\Gamma_7(x)+\Gamma_8(x)\,,
\end{align}
where
\begin{align}
\label{Gamma6}
&\Gamma_6(x)=\sum\limits_{x/2<n\leq x\atop{n\equiv a\,( d)}}\Lambda(n)
\sum\limits_{1\leq|h|\leq H}a(h)\Big(e(-h(n+1)^\gamma)-e(-hn^\gamma)\Big)e(tn^c)\,,\\
\label{Gamma7}
&\Gamma_7(x)\ll\sum\limits_{x/2<n\leq x\atop{n\equiv a\,( d)}}\Lambda(n)
\sum\limits_{|h|\leq H}b(h)e(-hn^\gamma)\,,\\
\label{Gamma8}
&\Gamma_8(x)\ll\sum\limits_{x/2<n\leq x\atop{n\equiv a\,( d)}}\Lambda(n)
\sum\limits_{|h|\leq H}b(h)e(-h(n+1)^\gamma)\,.
\end{align}
\textbf{Upper bound for} $\mathbf{\Gamma_6(x)}$

By \eqref{bh} and \eqref{Gamma6} we deduce
\begin{equation}\label{Gamma6est1}
\Gamma_6(x)\ll\sum\limits_{1\leq|h|\leq H}\frac{1}{|h|}
\Bigg|\sum\limits_{x/2<n\leq x\atop{n\equiv a\,( d)}}\Lambda(n)\Phi_h(n)e(tn^c-hn^\gamma)\Bigg|\,,
\end{equation}
where
\begin{equation*}
\Phi_h(y)=e\big(hy^\gamma-h(y+1)^\gamma\big)-1\,.
\end{equation*}
Bearing in mind the estimates
\begin{equation*}
\Phi_h(y)\ll|h|y^{\gamma-1}\,,  \quad \Phi'_h(y)\ll|h|y^{\gamma-2}
\end{equation*}
and using Abel's summation formula from \eqref{Gamma6est1} we derive
\begin{align}\label{Gamma6est2}
\Gamma_6(x)&\ll\sum\limits_{1\leq |h|\leq H}\frac{1}{|h|}\Bigg|\Phi_h(x)
\sum\limits_{x/2<n\leq x\atop{n\equiv a\,( d)}}\Lambda(n)e(tn^c+hn^\gamma)\Bigg|\nonumber\\
&+\sum\limits_{1\leq |h|\leq H}\frac{1}{|h|}\int\limits_{x/2}^{x}\Bigg|\Phi'_h(y)
\sum\limits_{x/2<n\leq y\atop{n\equiv a\,( d)}}\Lambda(n)e(tn^c+hn^\gamma)\Bigg|\,dy\nonumber\\
&\ll x^{\gamma-1}\big|\Gamma_9(x_1)\big|\,,
\end{align}
where
\begin{equation}\label{Gamma9}
\Gamma_9(x_1)=\sum\limits_{1\leq |h|\leq H}\Bigg|\sum\limits_{x/2<n\leq x_1\atop{n\equiv a\,( d)}}\Lambda(n)e(tn^c+hn^\gamma)\Bigg|
\end{equation}
for some number $x_1\in (x/2, x]$. Now the well-known formula
\begin{equation*}
\sum\limits_{k=1}^{d}e\left(\frac{kn}{d}\right)
=\begin{cases}d\,,\;\;\mbox{  if  }d\,|\,n,\\
0\,,\;\; \mbox{  if }d\nmid n
\end{cases}
\end{equation*}
and \eqref{Gamma9} lead to
\begin{equation}\label{Gamma9est1}
\Gamma_9(x_1)=\sum\limits_{1\leq |h|\leq H}\Bigg|\frac{1}{d}\sum\limits_{k=1}^{d}
\sum\limits_{x/2<n\leq x_1}\Lambda(n)e\left(tn^c+hn^\gamma+\frac{(n-a)k}{d}\right)\Bigg|\,.
\end{equation}
From \eqref{Gamma9est1} we understand that it is sufficient to estimate the sum
\begin{equation}\label{Gamma10}
\Gamma_{10}(x_1)=\sum\limits_{1\leq |h|\leq H}\Bigg|\sum\limits_{x/2<n\leq x_1}\Lambda(n)e\big(tn^c+hn^\gamma+kd^{-1}n\big)\Bigg|\,.
\end{equation}

\begin{lemma}\label{Gamma10est1}
Let $x_1\le x$, $0<\gamma<1<c$ and $|t|\leq x^{\gamma-c-\delta}$ for a sufficiently small $\delta>0$.
Then for the sum \eqref{Gamma10} we have
\begin{equation*}
\Gamma_{10}(x_1)\ll x^\frac{\delta}{2}\Big(H^\frac{7}{6} x^{\frac{\gamma}{6}+\frac{3}{4}}
+H^\frac{5}{4} x^{\frac{\gamma}{4}+\frac{5}{8}}+H^\frac{3}{4} x^{1-\frac{\gamma}{4}}+ H x^\frac{22}{25}\Big)\,.
\end{equation*}
\end{lemma}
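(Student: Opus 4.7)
The plan is to apply, for each fixed $h$ with $1\le|h|\le H$, Heath--Brown's identity (Lemma \ref{Heath-Brown}) to the sum over $\Lambda(n)$, reducing it up to a factor $(\log x)^6$ to $O(1)$ Type I and Type II bilinear exponential sums with phase $\phi_h(n):=tn^c+hn^\gamma+kd^{-1}n$. The crucial simplification is that, because $|t|\le x^{\gamma-c-\delta}$, on $(x/2,x]$ we have
$$\phi_h^{(j)}(n) = h\gamma(\gamma-1)\cdots(\gamma-j+1)\, n^{\gamma-j}+\mathcal{O}\bigl(x^{\gamma-j-\delta}\bigr),\qquad j\ge 2,$$
so the $tn^c$ and $kd^{-1}n$ pieces contribute only to the constant and linear part of $\phi_h$ and are invisible to the exponent-pair Lemmas \ref{Korput} and \ref{Sargos}. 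Setting $P=x/2$, $P_1=x_1$, I leave the Heath--Brown parameters $U,V,Z$ to be optimized at the end via Lemma \ref{Optimization}.

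For a Type I sum $\sum_{M<m\le M_1}a(m)\sum_{L<l\le L_1}e(\phi_h(ml))$ with $L\ge Z$ (the variant with a $\log l$ weight is handled by partial summation), I would fix $m$ and treat the inner $l$-sum by Lemma \ref{Sargos}: its third derivative in $l$ has order $\lambda\asymp |h|\,m^\gamma L^{\gamma-3}\asymp |h|\,x^\gamma L^{-3}$. Using $|a(m)|\ll\tau_5(m)\log x$, summing over $m$ with $ML\asymp x$ and over $h$ gives a contribution of size $x^{\delta/2}\bigl(H^{7/6}x^{1+\gamma/6}Z^{-1/2}+H^{2/3}x^{1-\gamma/3}\bigr)$. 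The eventual choice $Z\asymp x^{1/2}$ converts the first piece into the claimed $H^{7/6}x^{\gamma/6+3/4}$ term, and the second piece is dominated by the Type II contribution $H^{3/4}x^{1-\gamma/4}$.

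For a Type II sum $\sum_{M<m\le M_1}a(m)\sum_{L<l\le L_1}b(l)e(\phi_h(ml))$ with $U\le L\le V$, I would apply Cauchy--Schwarz in $m$ and then Lemma \ref{Squareoutlemma} to the inner $l$-sum with a free parameter $Q\in[1,L]$, reducing matters to estimating the shifted exponential sums
$$\sum_{m} e\bigl(\phi_h(m(l+q))-\phi_h(ml)\bigr),\qquad 0<|q|<Q.$$
The second and third derivatives in $m$ of this phase are of order $|hq|\,x^{\gamma-2}L$ and $|hq|\,x^{\gamma-3}L^2$ respectively, so Lemmas \ref{Korput} and \ref{Sargos} apply with explicit $\lambda$. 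After separating the $q=0$ diagonal contribution (which is $\ll x^{\delta}ML^2/Q$), taking square roots, and re-summing over $q,l,h$, the two choices of exponent pair together with varying $Q$ in $[1,L]$ produce the claimed $H^{5/4}x^{\gamma/4+5/8}$ and $H^{3/4}x^{1-\gamma/4}$ terms; the remaining $Hx^{22/25}$ term arises from an alternative balanced choice of $Q$ and $L$ near the boundary of the Heath--Brown range, again combined with Lemma \ref{Optimization}.

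The hard part will be the bookkeeping of the parameter optimization over $U,V,Z,Q$ required to extract exactly the four claimed terms, and in particular the verification that the admissibility hypotheses $U^2\le Z$, $128UZ^2\le P_1$ and $2^{18}P_1\le V^3$ of Lemma \ref{Heath-Brown} are respected by the final choices. Once this optimization is carried out, the four terms in the statement emerge as the maxima of the Type I and Type II bounds, and the $O(\log^7 x)$ accumulated logarithmic factors (together with divisor-type averages of $\tau_5$) are absorbed into the overall $x^{\delta/2}$.
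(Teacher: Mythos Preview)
Your approach is essentially the same as the paper's. The paper's proof consists of a single observation: since $|t|\le x^{\gamma-c-\delta}$, the second and third derivatives of $ty^c+hy^\gamma+kd^{-1}y$ on $(x/2,x]$ are asymptotic to those of $hy^\gamma+kd^{-1}y$, and therefore the argument of Guo--Li--Zhang \cite{Guo2} (pp.~10--11), which treats exactly the phase $hy^\gamma+kd^{-1}y$ via Heath--Brown's identity and van der Corput/Sargos estimates, applies verbatim. You correctly isolate this same derivative observation and then sketch the Heath--Brown decomposition that \cite{Guo2} carries out, so the two proofs coincide in substance; yours is simply more explicit about what the cited argument looks like.

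One caution on the bookkeeping you flag as ``the hard part'': your back-of-the-envelope choice $Z\asymp x^{1/2}$, while it does recover the term $H^{7/6}x^{\gamma/6+3/4}$ from your Type~I bound, is incompatible with the admissibility constraint $128\,UZ^2\le P_1\le x$ of Lemma~\ref{Heath-Brown} (it would force $U<2$). In \cite{Guo2} the four displayed terms in fact arise from a more delicate balancing of the Type~I and Type~II contributions, and the term $Hx^{22/25}$ in particular does not come from a simple second- or third-derivative estimate with the parameters you indicate. So when you fill in the details you should follow the actual parameter choices in \cite{Guo2} rather than the heuristic $Z\asymp x^{1/2}$.
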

\begin{proof}
It follows be the same arguments used in (\cite{Guo2}, pp. 10--11), since under the hypotheses on $t$, we have
\begin{equation*}
\left|\frac{d^j}{dy^j}\big( ty^c+hy^\gamma+kd^{-1}y\big)\right|
\asymp \left|\frac{d^j}{dy^j}\big(hy^\gamma+kd^{-1}y\big)\right|
\end{equation*}
for $j=2\,,3$ whenever $1\leq h \leq H$ and $x/2< y \leq x$.
\end{proof}

\begin{lemma}\label{SIest}
Set
\begin{equation}\label{SI}
S_I=\sum\limits_{1\leq |h|\leq H}\Bigg|\mathop{\sum\limits_{M<m\le M_1}a(m)
\sum\limits_{L<l\le L_1}}_{x/2<ml\le x_1}e\big(tm^cl^c+hm^\gamma l^\gamma+\theta ml\big)\Bigg|
\end{equation}
and
\begin{equation}\label{SI'}
S'_I=\sum\limits_{1\leq |h|\leq H}\Bigg|\mathop{\sum\limits_{M<m\le M_1}a(m)
\sum\limits_{L<l\le L_1}}_{x/2<ml\le x_1}e\big(tm^cl^c+hm^\gamma l^\gamma+\theta ml\big)\log l\Bigg|\,,
\end{equation}
where
\begin{equation}
\begin{split}\label{Conditions1}
&L\ge x^\frac{38c+115}{342}\,,\quad M_1\le 2M\,,\quad L_1\le 2L\,,\quad 0<\gamma<1<c<\frac{28}{19}\,,\\
&a(m)\ll \tau _5(m)\log x\,,\quad
0\leq\theta\leq1\,,\quad x_1\le x\,,\quad x^{\gamma-c-\delta}\leq|t|\leq x^\delta\,,
\end{split}
\end{equation}
with a sufficiently small $\delta>0$.
Then
\begin{equation*}
S_I,\, S'_I\ll  x^{\frac{\delta}{2}}\Big(Hx^{\frac{c}{9}+\frac{569}{684}}+H^\frac{7}{6}x^{\frac{\gamma}{6}-\frac{c}{18}+\frac{569}{684}}
+Hx^{1-\frac{\gamma}{3}}\Big)\,.
\end{equation*}
\end{lemma}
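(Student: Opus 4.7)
The plan is to bound the inner sum over $l$ by Lemma \ref{Sargos} (third-derivative estimate), then execute the $m$- and $h$-sums using the divisor bound $\sum_{m\le M_1}\tau_5(m)\ll M\log^4 x$ and the $1/6$-moment $\sum_{|h|\le H}|h|^{1/6}\ll H^{7/6}$. The extra $\log l$ factor in $S'_I$ will be removed by partial summation in $l$ at the cost of one extra $\log x$, which is harmless, so it suffices to treat $S_I$.

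Fix $m\in(M,M_1]$, $h$ with $1\le|h|\le H$ and set $f(l)=tm^cl^c+hm^\gamma l^\gamma+\theta ml$. Since $d^3/dl^3$ kills the linear term,
\[
f'''(l)=c(c-1)(c-2)tm^cl^{c-3}+\gamma(\gamma-1)(\gamma-2)hm^\gamma l^{\gamma-3},
\]
and on $l\asymp L$ the two summands have sizes $|t|M^cL^{c-3}$ and $|h|M^\gamma L^{\gamma-3}$ respectively. Granting $|f'''(l)|\asymp\lambda:=|t|M^cL^{c-3}+|h|M^\gamma L^{\gamma-3}$ (see the last paragraph), Lemma \ref{Sargos} gives the inner sum $\ll L\lambda^{1/6}+\lambda^{-1/3}$. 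Splitting $\lambda^{1/6}\le(|t|M^cL^{c-3})^{1/6}+(|h|M^\gamma L^{\gamma-3})^{1/6}$ and using $\lambda\ge M^\gamma L^{\gamma-3}$ for $\lambda^{-1/3}$ yields three contributions. Summing over $m$ with the divisor weight and using $ML\asymp x$ converts them into $|t|^{1/6}x^{1+c/6}L^{-1/2}$, $|h|^{1/6}x^{1+\gamma/6}L^{-1/2}$ and $x^{1-\gamma/3}$; the subsequent $h$-sum then produces
\[
S_I\ll\log^{O(1)}x\bigl(H|t|^{1/6}x^{1+c/6}L^{-1/2}+H^{7/6}x^{1+\gamma/6}L^{-1/2}+Hx^{1-\gamma/3}\bigr).
\]
Inserting $|t|\le x^\delta$ and $L^{-1/2}\le x^{-(38c+115)/684}$, together with the identities $1+c/6-(38c+115)/684=c/9+569/684$ and $1+\gamma/6-(38c+115)/684=\gamma/6-c/18+569/684$, the logarithmic factors and $x^{\delta/6}$ are absorbed into $x^{\delta/2}$ and the claimed bound follows.

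The main obstacle is justifying $|f'''(l)|\asymp\lambda$. For $1<c<2$ and $0<\gamma<1$ the coefficients $c(c-1)(c-2)$ and $\gamma(\gamma-1)(\gamma-2)$ are nonzero and of opposite sign, so when $t$ and $h$ share a sign the two summands in $f'''$ partially cancel and $f'''$ can vanish on $(L,L_1]$. However, the equation $f'''(l)=0$ reduces to an equation of the form $l^{c-\gamma}=\mathrm{const}$, and since $c\ne\gamma$ it has at most one positive solution; splitting $(L,L_1]$ at that point produces at most two subintervals on each of which $f'''$ has constant sign and is monotone. A further dyadic decomposition on each piece according to the size of $|f'''|$ yields $O(\log x)$ ranges on which $|f'''|\asymp\mu$ for a single $\mu\le\lambda$; applying Lemma \ref{Sargos} on each piece and reassembling produces exactly the three-term bound above (the worst $\mu$'s saturate at $\lambda$), at the cost of log factors already absorbed into $x^{\delta/2}$. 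This is precisely the delicate step, everything else being routine bookkeeping with $ML\asymp x$ and the length restriction $L\ge x^{(38c+115)/342}$.
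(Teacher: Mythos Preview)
Your argument follows the paper's exactly: bound the inner $l$-sum with the third-derivative estimate (Lemma~\ref{Sargos}), then sum over $m$ via the divisor bound and over $h$, and finally insert $|t|\le x^\delta$ together with $L\ge x^{(38c+115)/342}$. Your use of $L^{-1/2}$ where the paper carries $M^{1/2}$ is equivalent through $ML\asymp x$, and your treatment of $S'_I$ by partial summation in $l$ is also the paper's. You are moreover right to flag the relation $|f'''(l)|\asymp\lambda$: the paper asserts it without comment, yet for $0<\gamma<1<c<2$ the coefficients $c(c-1)(c-2)$ and $\gamma(\gamma-1)(\gamma-2)$ have opposite signs, so when $t$ and $h$ share a sign the two summands in $f'''$ may cancel and $f'''$ can vanish on $(L,2L]$.

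The gap is in your proposed resolution. On a piece where $|f'''|\asymp\mu$, Lemma~\ref{Sargos} contributes $(\text{length})\,\mu^{1/6}+\mu^{-1/3}$. The first contribution does saturate at $\mu=\lambda$ (the lengths sum to at most $L$), but $\mu^{-1/3}$ is \emph{largest} for the \emph{smallest} $\mu$, not for $\mu=\lambda$. Near the unique zero $l_0$ one checks $|f^{(4)}(l_0)|\asymp\lambda/L$, so $|f'''(l)|\asymp(\lambda/L)|l-l_0|$ and the dyadic ladder descends to $\mu_{\min}\asymp\lambda/L$ before the piece shrinks to a single integer; then $\mu_{\min}^{-1/3}\asymp L^{1/3}\lambda^{-1/3}$, and even after balancing against the trivial length bound one only reaches $(L/\lambda)^{1/4}$, which feeds through to $H^{3/4}x^{1-\gamma/4}$ rather than $Hx^{1-\gamma/3}$ --- not dominated by the stated bound when $H$ is small. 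Your last paragraph therefore does not close the gap it correctly diagnoses. A working repair requires more than dyadic splitting in $|f'''|$ alone: for instance, note that the relation $f'''(l_0)=0$ forces $|f''(l_0)|\asymp|h|x^\gamma L^{-2}$, so one may apply Lemma~\ref{Korput} on a short neighbourhood of $l_0$ and Lemma~\ref{Sargos} on the complement (where $|f'''|\asymp\lambda$ genuinely holds), then optimise the cut.
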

\begin{proof}
First we notice that \eqref{SI}  and \eqref{Conditions1} imply
\begin{equation}\label{LMasympX}
ML\asymp x\,.
\end{equation}
Denote
\begin{equation}\label{flm}
f(l)=tm^cl^c+hm^\gamma l^\gamma+\theta ml\,.
\end{equation}
By \eqref{SI}, \eqref{Conditions1} and \eqref{flm} we write
\begin{equation}\label{SIest1}
S_I\ll x^{\frac{\delta}{4}}\sum\limits_{1\leq |h|\leq H}\sum\limits_{M<m\le M_1}
\bigg|\sum\limits_{L'<l\leq L'_1}e\big(f(l)\big)\bigg|\,,
\end{equation}
where
\begin{equation}\label{L'L1'}
L'=\max{\bigg\{L,\frac{N}{m}\bigg\}}\,,\quad L_1'=\min{\bigg\{L_1,\frac{N_1}{m}\bigg\}}\,.
\end{equation}
From \eqref{Conditions1}, \eqref{flm} and \eqref{L'L1'} we have
\begin{equation}\label{'''est}
f'''(l)=c(c-1)(c-2)tm^cl^{c-3}+\gamma(\gamma-1)(\gamma-2)hm^\gamma l^{\gamma-3}\asymp |t|M^cL^{c-3}+|h|M^\gamma L^{\gamma-3}\,.
\end{equation}
Now \eqref{Conditions1}, \eqref{LMasympX}, \eqref{SIest1}, \eqref{'''est}  and Lemma \ref{Sargos}  yield
\begin{align*}
S_I&\ll x^{\frac{\delta}{4}}\sum\limits_{1\leq |h|\leq H}\sum\limits_{M<m\le M_1}
\Big(L\big(|t|M^cL^{c-3}+|h|M^\gamma L^{\gamma-3}\big)^\frac{1}{6}+\big(|t|M^cL^{c-3}+|h|M^\gamma L^{\gamma-3}\big)^{-\frac{1}{3}}\Big)\\
&\ll x^{\frac{\delta}{4}}\sum\limits_{1\leq |h|\leq H}\sum\limits_{M<m\le M_1}
\Big(|t|^\frac{1}{6}M^\frac{c}{6}L^{\frac{c}{6}+\frac{1}{2}}+|h|^\frac{1}{6}M^\frac{\gamma}{6}L^{\frac{\gamma}{6}+\frac{1}{2}}
+|t|^{-\frac{1}{3}}M^{-\frac{c}{3}}L^{1-\frac{c}{3}}+|h|^{-\frac{1}{3}}M^{-\frac{\gamma}{3}}L^{1-\frac{\gamma}{3}}\Big)\\
&\ll x^{\frac{\delta}{4}}\sum\limits_{1\leq |h|\leq H}
\Big(|t|^\frac{1}{6}x^{\frac{c}{6}+\frac{1}{2}}M^\frac{1}{2}+|h|^\frac{1}{6}x^{\frac{\gamma}{6}+\frac{1}{2}}M^\frac{1}{2}
+|t|^{-\frac{1}{3}}x^{1-\frac{c}{3}}+|h|^{-\frac{1}{3}}x^{1-\frac{\gamma}{3}}\Big)\\                                                                                                                 &\ll x^{\frac{\delta}{4}}\Big(H|t|^\frac{1}{6}x^{\frac{c}{6}+\frac{1}{2}}M^\frac{1}{2}+H^\frac{7}{6}x^{\frac{\gamma}{6}+\frac{1}{2}}M^\frac{1}{2}
+H|t|^{-\frac{1}{3}}x^{1-\frac{c}{3}}+H^\frac{2}{3}x^{1-\frac{\gamma}{3}}\Big)\\
&\ll x^{\frac{\delta}{4}}\Big(Hx^{\frac{c}{6}+\frac{1}{2}+\frac{\delta}{6}}M^\frac{1}{2}
+H^\frac{7}{6}x^{\frac{\gamma}{6}+\frac{1}{2}}M^\frac{1}{2}+Hx^{1-\frac{\gamma}{3}+\frac{\delta}{3}}+H^\frac{2}{3}x^{1-\frac{\gamma}{3}}\Big)\\
&\ll x^{\frac{\delta}{2}}\Big(Hx^{\frac{c}{9}+\frac{569}{684}}+H^\frac{7}{6}x^{\frac{\gamma}{6}-\frac{c}{18}+\frac{569}{684}}
+Hx^{1-\frac{\gamma}{3}}\Big)\,.
\end{align*}
To estimate the sum defined by \eqref{SI'} we apply Abel's summation formula and proceed in the same way to deduce
\begin{equation*}
S_I'\ll  x^{\frac{\delta}{2}}\Big(Hx^{\frac{c}{9}+\frac{569}{684}}
+H^\frac{7}{6}x^{\frac{\gamma}{6}-\frac{c}{18}+\frac{569}{684}}+Hx^{1-\frac{\gamma}{3}}\Big)\,.
\end{equation*}
This proves the lemma.
\end{proof}

\begin{lemma}\label{SIIest}
Set
\begin{equation}\label{SII}
S_{II}=\sum\limits_{1\leq |h|\leq H}\Bigg|\mathop{\sum\limits_{M<m\le M_1}a(m)
\sum\limits_{L<l\le L_1}}_{x/2<ml\le x_1}b(l)e\big(tm^cl^c+hm^\gamma l^\gamma+\theta ml\big)\Bigg|\,,
\end{equation}
where
\begin{equation}
\begin{split}\label{Conditions2}
&2^{-10}x^\frac{56-38c}{171}\leq L\leq 2^7x^\frac{1}{3}\,,\quad M_1\le 2M\,,\quad L_1\le 2L\,,\quad 0<\gamma<1<c<\frac{28}{19}\,,\\
&a(m)\ll \tau _5(m)\log x\,,\;\;b(l)\ll \tau _5(l)\log x\,,\;\; 0\leq\theta\leq1\,,\;\;x_1\le x\,,\;\; x^{\gamma-c-\delta}\leq|t|\leq x^\delta\,,
\end{split}
\end{equation}
with a sufficiently small $\delta>0$.
Then
\begin{align*}
S_{II}&\ll  x^\frac{\delta}{2}\Big(Hx^{\frac{c}{4}+\frac{7}{12}}
+H^\frac{5}{4}x^{\frac{\gamma}{4}+\frac{7}{12}}+Hx^{\frac{c}{9}+\frac{143}{171}}+Hx^{1-\frac{\gamma}{4}}
+Hx^{\frac{c}{6}+\frac{13}{18}}+H^\frac{7}{6}x^{\frac{\gamma}{6}+\frac{13}{18}}+H^\frac{9}{8}x^\frac{5}{6}\\
&\hspace{118mm}+H^\frac{7}{8}x^{\frac{c}{8}-\frac{\gamma}{8}+\frac{5}{6}}\Big)\,.
\end{align*}
\end{lemma}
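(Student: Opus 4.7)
The plan is to treat $S_{II}$ as a bilinear exponential sum and, by Cauchy--Schwarz and one or two applications of Lemma~\ref{Squareoutlemma}, reduce it to one-dimensional exponential sums in $m$ that can be bounded by Lemma~\ref{Korput} and Lemma~\ref{Sargos}. First, for each fixed $h$, I would apply Cauchy--Schwarz on the $m$-variable, expand the resulting inner square into a double sum over $l_{1},l_{2}\in(L,L_{1}]$, parametrize $l_{2}=l_{1}+q$ with $|q|<L$, and separate the diagonal $q=0$ (which contributes $\ll MLx^{\delta/2}\ll x^{1+\delta}$, absorbed in the stated bound). After interchange of summation, the off-diagonal part is governed by inner exponential sums in $m$ with phase
\[
\Psi_{h,q,l_{1}}(m)=tm^{c}\bigl(l_{1}^{c}-(l_{1}+q)^{c}\bigr)+hm^{\gamma}\bigl(l_{1}^{\gamma}-(l_{1}+q)^{\gamma}\bigr)-\theta mq.
\]

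Using $(l_{1}+q)^{\alpha}-l_{1}^{\alpha}\asymp qL^{\alpha-1}$, for $k\ge 2$ the derivatives satisfy $\Psi^{(k)}(m)\asymp|t|qM^{c-k}L^{c-1}+|h|qM^{\gamma-k}L^{\gamma-1}$. I would estimate each inner $m$-sum in two complementary ways: via Lemma~\ref{Korput} ($k=2$) and via Lemma~\ref{Sargos} ($k=3$), splitting according to whether the $|t|$-term or the $|h|$-term dominates the relevant derivative. Summing over $q$, $l_{1}$ and $h$ and taking square roots converts these into contributions to $S_{II}$. The van der Corput route, applied separately to the two dominance regimes, produces the terms $Hx^{c/4+7/12}$, $H^{5/4}x^{\gamma/4+7/12}$, $Hx^{c/9+143/171}$ and $Hx^{1-\gamma/4}$, while the Sargos route yields $Hx^{c/6+13/18}$ and $H^{7/6}x^{\gamma/6+13/18}$. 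The remaining two terms $H^{9/8}x^{5/6}$ and $H^{7/8}x^{(c-\gamma)/8+5/6}$ arise by a second application of Lemma~\ref{Squareoutlemma} with an auxiliary shift parameter $Q\in[1,L]$, followed by Lemma~\ref{Sargos} on the resulting higher-derivative sum, the parameter $Q$ being optimized through Lemma~\ref{Optimization} by balancing the three candidate lengths $M$, $L$, $H$.

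The main obstacle will be the regime book-keeping. Because the $|t|$-term and the $|h|$-term can each dominate $\Psi^{(k)}(m)$ depending on the sizes of $L$, $M$, $|h|$ relative to $|t|\in[x^{\gamma-c-\delta},x^{\delta}]$, every application of Lemma~\ref{Korput} or Lemma~\ref{Sargos} splits into two sub-cases whose bounds must be summed and recombined without crossing wires. The range $2^{-10}x^{(56-38c)/171}\le L\le 2^{7}x^{1/3}$ is precisely the window in which all eight candidate bounds are simultaneously controllable: the upper bound on $L$ keeps the fourth-derivative estimate from the second differencing meaningful, while the lower bound is exactly the threshold at which the $143/171$ exponent emerges from Sargos applied to the $|h|$-dominated part of $\Psi'''$. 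A careless split would produce a term outside the target bound and, through the admissibility range $c<28/19$, ultimately violate the constraint \eqref{cgamma} needed for the main theorem.
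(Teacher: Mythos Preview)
Your plan departs from the paper's argument in a structural way, and the mismatch shows in your attribution of the eight terms.

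The paper proves Lemma~\ref{SIIest} with a \emph{single} Weyl--van der Corput step: Cauchy on $m$, one application of Lemma~\ref{Squareoutlemma} to the $l$-sum with a \emph{free} parameter $Q\in[1,L]$, and then only Lemma~\ref{Korput} (second derivative) on the resulting $m$-sum with phase $g(m)$. This yields a bound of the shape
\[
|\cdot|^2\ll x^{\delta/2}\Bigl(x^2Q^{-1}+A_1Q^{1/2}+A_2Q^{1/2}+B_2Q^{-1/2}+B_3Q^{-1/2}\Bigr),
\]
and \emph{all eight} terms of the lemma drop out of one application of Lemma~\ref{Optimization} to this expression in $Q$. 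In particular, the contributions $Hx^{c/6+13/18}$ and $H^{7/6}x^{\gamma/6+13/18}$ are the cross terms $(A_i B_1^{1/2})^{2/3}$ balancing $Q^{1/2}$ against $Q^{-1}$, while $H^{9/8}x^{5/6}$ and $H^{7/8}x^{(c-\gamma)/8+5/6}$ are the cross terms $(A_iB_j)^{1/2}$ balancing $Q^{1/2}$ against $Q^{-1/2}$. Lemma~\ref{Sargos} is never invoked in this proof, and there is no second differencing.

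Your proposal instead expands the full square (effectively $Q=L$), then tries to recover the various exponents by feeding the $m$-sum through Lemma~\ref{Korput} and Lemma~\ref{Sargos} separately, and finally a second Weyl shift for the last two terms. Two concrete problems: (i)~with $Q$ fixed at $L$ rather than optimized, Korput on the $m$-sum gives $L^{1/2}$ (not $L^{1/4}$) in the squared bound, so after the square root and $L\le x^{1/3}$ you land on exponent $2/3$, not $7/12$; (ii)~your diagonal count is off---after Cauchy in $m$ the $q=0$ term contributes $M^{1+\varepsilon}\cdot M\cdot L^{1+\varepsilon}\asymp x^{2+\varepsilon}L^{-1}$ to the square, not $ML$. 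The $Hx^{c/9+143/171}$ term in the lemma is exactly this diagonal (equivalently the $x^2L^{-1}$ boundary term from Lemma~\ref{Optimization}) evaluated at the lower endpoint $L\gg x^{(56-38c)/171}$. So the mechanism you describe would not reproduce the stated exponents; the missing ingredient is introducing the free $Q$ at the \emph{first} differencing and letting Lemma~\ref{Optimization} do the work.
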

\begin{proof}
First we notice that  \eqref{SII}  and \eqref{Conditions2} give us
\begin{equation}\label{LMasympX2}
ML\asymp x\,.
\end{equation}
From  \eqref{flm}, \eqref{Conditions2}, \eqref{LMasympX2}, Cauchy's inequality
and Lemma \ref{Squareoutlemma} with
\begin{equation}\label{QL}
1\leq Q\leq L
\end{equation}
it follows
\begin{align}\label{SIIest1}
&\Bigg|\mathop{\sum\limits_{M<m\le M_1}a(m)
\sum\limits_{L<l\le L_1}}_{x/2<ml\le x_1}b(l)e\big(tm^cl^c+hm^\gamma l^\gamma+\theta ml\big)\Bigg|^2\nonumber\\
&\ll\sum\limits_{M<m\le M_1}|a(m)|^2\sum\limits_{M<m\le M_1}\bigg|\sum\limits_{L<l\le L_1\atop{x/2<ml\le x_1}}
b(l)e\big(f(l)\big)\bigg|^2\nonumber\\
&\ll M^{1+\frac{\delta}{2}}\sum\limits_{M<m\le M_1}\frac{L}{Q}\sum_{|q|<Q}\bigg(1-\frac{q}{Q}\bigg)
\sum\limits_{L<l, \, l+q\leq L_1\atop{x/2<ml\le x_1\atop{x<m(l+q)\le x_1}}}b(l+q)\overline{b(l)}
e\big(f(l+q)-f(l)\big)\nonumber\\
&\ll \frac{LM^{1+\frac{\delta}{2}}}{Q}\sum\limits_{M<m\le M_1}\Bigg(L^{1+\frac{\delta}{2}}+\sum_{1\leq |q|<Q}\bigg(1-\frac{q}{Q}\bigg)
\sum\limits_{L<l, \, l+q\leq L_1\atop{x<ml\le x_1\atop{x<m(l+q)\le x_1}}}
b(l+q)\overline{b(l)}e\big(f(l+q)-f(l)\big)\Bigg)\nonumber\\
&\ll x^\frac{\delta}{2}\Bigg(\frac{x^2}{Q}+\frac{x}{Q}\sum\limits_{1\leq |q|\leq Q}
\sum\limits_{L<l, \, l+q\leq L_1}\bigg|\sum\limits_{M'<m\leq M_1'}e\big(g(m)\big)\bigg|\Bigg)\,,
\end{align}
where
\begin{equation}\label{M'M1'}
M'=\max{\bigg\{M,\frac{x}{l},\frac{x}{l+q}\bigg\}}\,,
\quad M_1'=\min{\bigg\{M_1,\frac{x_1}{l},\frac{x_1}{l+q}\bigg\}}
\end{equation}
and
\begin{equation}\label{gm}
g(m)=tm^c(l+q)^c-tm^cl^c+hm^\gamma (l+q)^\gamma-hm^\gamma l^\gamma+\theta mq\,.
\end{equation}
By \eqref{Conditions2}, \eqref{M'M1'} and \eqref{gm} we have
\begin{equation}\label{''est}
g''(m)\asymp |t|M^{c-2}L^{c-1}|q|+|h|M^{\gamma-2} L^{\gamma-1}|q|\,.
\end{equation}
Now \eqref{Conditions2}, \eqref{LMasympX2}, \eqref{SIIest1}, \eqref{''est} and Lemma \ref{Korput} lead to
\begin{align}\label{SIIest2}
&\Bigg|\mathop{\sum\limits_{M<m\le M_1}a(m)
\sum\limits_{L<l\le L_1}}_{x/2<ml\le x_1}b(l)e\big(tm^cl^c+hm^\gamma l^\gamma+\theta ml\big)\Bigg|^2\nonumber\\
&\ll x^\frac{\delta}{2}\Bigg(\frac{x^2}{Q}+\frac{x}{Q}\sum\limits_{1\leq |q|\leq Q}
\sum\limits_{L<l\leq L_1}M\Big(|t|M^{c-2}L^{c-1}|q|+|h|M^{\gamma-2} L^{\gamma-1}|q|\Big)^\frac{1}{2} \nonumber\\
&\hspace{18mm}+\Big(|t|M^{c-2}L^{c-1}|q|+|h|M^{\gamma-2} L^{\gamma-1}|q|\Big)^{-\frac{1}{2}}\Bigg)\nonumber\\
&\ll x^\frac{\delta}{2}\Bigg(\frac{x^2}{Q}+\frac{x}{Q}\sum\limits_{1\leq |q|\leq Q}
\sum\limits_{L<l\leq L_1}|t|^\frac{1}{2}M^\frac{c}{2}L^{\frac{c}{2}-\frac{1}{2}}|q|^\frac{1}{2}
+|h|^\frac{1}{2}M^\frac{\gamma}{2} L^{\frac{\gamma}{2}-\frac{1}{2}}|q|^\frac{1}{2} \nonumber\\
&\hspace{18mm}+|t|^{-\frac{1}{2}}M^{1-\frac{c}{2}}L^{\frac{1}{2}-\frac{c}{2}}|q|^{-\frac{1}{2}}
+|h|^{-\frac{1}{2}}M^{1-\frac{\gamma}{2}}L^{\frac{1}{2}-\frac{\gamma}{2}}|q|^{-\frac{1}{2}}\Bigg)\nonumber\\
&\ll x^\frac{\delta}{2}\Big(x^2Q^{-1}+x^{1+\frac{c}{2}}|t|^\frac{1}{2}L^\frac{1}{2}Q^\frac{1}{2}
+x^{1+\frac{\gamma}{2}}|h|^\frac{1}{2}L^\frac{1}{2}Q^\frac{1}{2} \nonumber\\
&\hspace{18mm}+x^{2-\frac{c}{2}}|t|^{-\frac{1}{2}}L^\frac{1}{2}Q^{-\frac{1}{2}}
+x^{2-\frac{\gamma}{2}}|h|^{-\frac{1}{2}}L^\frac{1}{2}Q^{-\frac{1}{2}}\Big)\,.
\end{align}
From \eqref{QL}, \eqref{SIIest2} and Lemma \ref{Optimization} it follows that there exists an optimal $Q$ such that
\begin{align*}
&\Bigg|\mathop{\sum\limits_{M<m\le M_1}a(m)
\sum\limits_{L<l\le L_1}}_{x/2<ml\le x_1}b(l)e\big(tm^cl^c+hm^\gamma l^\gamma+\theta ml\big)\Bigg|^2\\
&\ll x^\frac{\delta}{2}\Big(x^{1+\frac{c}{2}}|t|^\frac{1}{2}L^\frac{1}{2}
+x^{1+\frac{\gamma}{2}}|h|^\frac{1}{2}L^\frac{1}{2}+x^2L^{-1}
+x^{2-\frac{c}{2}}|t|^{-\frac{1}{2}}+x^{2-\frac{\gamma}{2}}|h|^{-\frac{1}{2}}\\
&+x^{\frac{4}{3}+\frac{c}{3}}|t|^\frac{1}{3}L^\frac{1}{3}+x^{\frac{4}{3}+\frac{\gamma}{3}}|h|^\frac{1}{3}L^\frac{1}{3}
+x^\frac{3}{2}L^\frac{1}{2}+x^{\frac{3}{2}-\frac{c}{4}+\frac{\gamma}{4}}|t|^{-\frac{1}{4}}|h|^\frac{1}{4}L^\frac{1}{2}
+x^{\frac{3}{2}+\frac{c}{4}-\frac{\gamma}{4}}|t|^\frac{1}{4}|h|^{-\frac{1}{4}}L^\frac{1}{2}\Big)\,.\\
\end{align*}
Therefore
\begin{align}\label{SIIest3}
&\Bigg|\mathop{\sum\limits_{M<m\le M_1}a(m)
\sum\limits_{L<l\le L_1}}_{x/2<ml\le x_1}b(l)e\big(tm^cl^c+hm^\gamma l^\gamma+\theta ml\big)\Bigg|\nonumber\\
&\ll x^\frac{\delta}{4}\Big(x^{\frac{1}{2}+\frac{c}{4}}|t|^\frac{1}{4}L^\frac{1}{4}
+x^{\frac{1}{2}+\frac{\gamma}{4}}|h|^\frac{1}{4}L^\frac{1}{4}+xL^{-\frac{1}{2}}
+x^{1-\frac{c}{4}}|t|^{-\frac{1}{4}}+x^{1-\frac{\gamma}{4}}|h|^{-\frac{1}{4}}
+x^{\frac{2}{3}+\frac{c}{6}}|t|^\frac{1}{6}L^\frac{1}{6}\nonumber\\
&+x^{\frac{2}{3}+\frac{\gamma}{6}}|h|^\frac{1}{6}L^\frac{1}{6}
+x^\frac{3}{4}L^\frac{1}{4}+x^{\frac{3}{4}-\frac{c}{8}+\frac{\gamma}{8}}|t|^{-\frac{1}{8}}|h|^\frac{1}{8}L^\frac{1}{4}
+x^{\frac{3}{4}+\frac{c}{8}-\frac{\gamma}{8}}|t|^\frac{1}{8}|h|^{-\frac{1}{8}}L^\frac{1}{4}\Big)\,.
\end{align}
Bearing in mind \eqref{SII}, \eqref{Conditions2} and \eqref{SIIest3} we derive
\begin{align*}
S_{II}&\ll x^\frac{\delta}{4}\sum\limits_{1\leq |h|\leq H}\Big(x^{\frac{1}{2}+\frac{c}{4}}|t|^\frac{1}{4}L^\frac{1}{4}
+x^{\frac{1}{2}+\frac{\gamma}{4}}|h|^\frac{1}{4}L^\frac{1}{4}+xL^{-\frac{1}{2}}
+x^{1-\frac{c}{4}}|t|^{-\frac{1}{4}}+x^{1-\frac{\gamma}{4}}|h|^{-\frac{1}{4}}\\
&+x^{\frac{2}{3}+\frac{c}{6}}|t|^\frac{1}{6}L^\frac{1}{6}+x^{\frac{2}{3}+\frac{\gamma}{6}}|h|^\frac{1}{6}L^\frac{1}{6}
+x^\frac{3}{4}L^\frac{1}{4}+x^{\frac{3}{4}-\frac{c}{8}+\frac{\gamma}{8}}|t|^{-\frac{1}{8}}|h|^\frac{1}{8}L^\frac{1}{4}
+x^{\frac{3}{4}+\frac{c}{8}-\frac{\gamma}{8}}|t|^\frac{1}{8}|h|^{-\frac{1}{8}}L^\frac{1}{4}\Big)\\
&\ll x^\frac{\delta}{4}\Big(Hx^{\frac{1}{2}+\frac{c}{4}}|t|^\frac{1}{4}L^\frac{1}{4}
+H^\frac{5}{4}x^{\frac{1}{2}+\frac{\gamma}{4}}L^\frac{1}{4}+HxL^{-\frac{1}{2}}
+Hx^{1-\frac{c}{4}}|t|^{-\frac{1}{4}}+H^\frac{3}{4}x^{1-\frac{\gamma}{4}}\\
&+Hx^{\frac{2}{3}+\frac{c}{6}}|t|^\frac{1}{6}L^\frac{1}{6}+H^\frac{7}{6}x^{\frac{2}{3}+\frac{\gamma}{6}}L^\frac{1}{6}
+Hx^\frac{3}{4}L^\frac{1}{4}+H^\frac{9}{8}x^{\frac{3}{4}-\frac{c}{8}+\frac{\gamma}{8}}|t|^{-\frac{1}{8}}L^\frac{1}{4}
+H^\frac{7}{8}x^{\frac{3}{4}+\frac{c}{8}-\frac{\gamma}{8}}|t|^\frac{1}{8}L^\frac{1}{4}\Big)\\
&\ll x^\frac{\delta}{4}\Big(Hx^{\frac{1}{2}+\frac{c}{4}+\frac{\delta}{4}}L^\frac{1}{4}
+H^\frac{5}{4}x^{\frac{1}{2}+\frac{\gamma}{4}}L^\frac{1}{4}+HxL^{-\frac{1}{2}}
+Hx^{1-\frac{\gamma}{4}+\frac{\delta}{4}}+H^\frac{3}{4}x^{1-\frac{\gamma}{4}}
+Hx^{\frac{2}{3}+\frac{c}{6}+\frac{\delta}{6}}L^\frac{1}{6}\\
&\hspace{44mm}+H^\frac{7}{6}x^{\frac{2}{3}+\frac{\gamma}{6}}L^\frac{1}{6}
+Hx^\frac{3}{4}L^\frac{1}{4}+H^\frac{9}{8}x^{\frac{3}{4}+\frac{\delta}{8}}L^\frac{1}{4}
+H^\frac{7}{8}x^{\frac{3}{4}+\frac{c}{8}-\frac{\gamma}{8}+\frac{\delta}{8}}L^\frac{1}{4}\Big)\\
&\ll x^\frac{\delta}{2}\Big(Hx^{\frac{1}{2}+\frac{c}{4}}L^\frac{1}{4}+
H^\frac{5}{4}x^{\frac{1}{2}+\frac{\gamma}{4}}L^\frac{1}{4}+HxL^{-\frac{1}{2}}+Hx^{1-\frac{\gamma}{4}}
+Hx^{\frac{2}{3}+\frac{c}{6}}L^\frac{1}{6}+H^\frac{7}{6}x^{\frac{2}{3}+\frac{\gamma}{6}}L^\frac{1}{6}\\
&\hspace{93.5mm}+H^\frac{9}{8}x^\frac{3}{4}L^\frac{1}{4}
+H^\frac{7}{8}x^{\frac{3}{4}+\frac{c}{8}-\frac{\gamma}{8}}L^\frac{1}{4}\Big)\\
&\ll x^\frac{\delta}{2}\Big(Hx^{\frac{c}{4}+\frac{7}{12}}
+H^\frac{5}{4}x^{\frac{\gamma}{4}+\frac{7}{12}}+Hx^{\frac{c}{9}+\frac{143}{171}}+Hx^{1-\frac{\gamma}{4}}
+Hx^{\frac{c}{6}+\frac{13}{18}}+H^\frac{7}{6}x^{\frac{\gamma}{6}+\frac{13}{18}}+H^\frac{9}{8}x^\frac{5}{6}\\
&\hspace{118mm}+H^\frac{7}{8}x^{\frac{c}{8}-\frac{\gamma}{8}+\frac{5}{6}}\Big)\,.
\end{align*}
This proves the lemma.
\end{proof}
It remains to apply Heath-Brown's identity to the sum  $\Gamma_{10}(x_1)$.
\begin{lemma}\label{Gamma10est2}
Let $x_1\le x$, $0<\gamma<1<c<\frac{28}{19}$ and  $x^{\gamma-c-\delta}\leq|t|\leq x^\delta$ with a sufficiently small $\delta>0$.
Then for the sum \eqref{Gamma10} we have
\begin{align*}
\Gamma_{10}(x_1)&\ll  x^\frac{\delta}{2}\Big(Hx^{\frac{c}{4}+\frac{7}{12}}
+H^\frac{5}{4}x^{\frac{\gamma}{4}+\frac{7}{12}}+Hx^{\frac{c}{9}+\frac{143}{171}}
+Hx^{1-\frac{\gamma}{4}}+Hx^{\frac{c}{6}+\frac{13}{18}}
+H^\frac{7}{6}x^{\frac{\gamma}{6}+\frac{13}{18}}\\
&\hspace{60mm}+H^\frac{9}{8}x^\frac{5}{6}+H^\frac{7}{6}x^{\frac{\gamma}{6}-\frac{c}{18}+\frac{569}{684}}
+H^\frac{7}{8}x^{\frac{c}{8}-\frac{\gamma}{8}+\frac{5}{6}}\Big)\,.
\end{align*}
\end{lemma}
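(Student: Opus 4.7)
The plan is to apply Heath-Brown's identity (Lemma \ref{Heath-Brown}) to the inner sum over $n$ in \eqref{Gamma10}. Writing $G(n) = e(tn^c + hn^\gamma + kd^{-1}n)$, the inner sum is of the form $\sum_{x/2 < n \leq x_1} \Lambda(n) G(n)$. The preceding two lemmas \ref{SIest} and \ref{SIIest} have been calibrated precisely so that, after Heath-Brown's decomposition, each resulting Type I piece falls under Lemma \ref{SIest} and each Type II piece falls under Lemma \ref{SIIest}.

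I would take $P = x/2$, $P_1 = x_1$, and set
\[
U = 2^{-10} x^{\frac{56-38c}{171}}, \qquad V = 2^7 x^{\frac{1}{3}}, \qquad Z = x^{\frac{38c+115}{342}}.
\]
A routine computation verifies the compatibility conditions of Lemma \ref{Heath-Brown}: one has $UZ^2 = 2^{-10} x$, so $128\,UZ^2 \leq P_1$; $V^3 = 2^{21} x$, so $2^{18} P_1 \leq V^3$; and the condition $U^2 \leq Z$ reduces to $109 \leq 190c$, which holds trivially since $c > 1$. The chain $2 \leq U < V \leq Z \leq P$ is equally easy to verify using $1 < c < 28/19$ for large $x$. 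Lemma \ref{Heath-Brown} then expresses the inner sum as $O(\log^6 x)$ Type I sums (of both variants, with and without the logarithm factor), for which $L \geq Z = x^{(38c+115)/342}$, matching the hypothesis of Lemma \ref{SIest}, and Type II sums, for which $2^{-10}x^{(56-38c)/171} \leq L \leq 2^7 x^{1/3}$, matching Lemma \ref{SIIest}. The extra phase $kd^{-1}n$ corresponds to $\theta = kd^{-1} \in [0,1]$, as allowed in both lemmas.

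Applying Lemmas \ref{SIest} and \ref{SIIest} (which already include the summation over $h$) and absorbing the $\log^6 x$ factor into $x^{\delta/2}$, I would collect the resulting contributions. Two terms from Lemma \ref{SIest} are then dominated and can be dropped: $Hx^{c/9 + 569/684}$ is majorised by $Hx^{c/9 + 143/171}$ from Lemma \ref{SIIest} (since $143/171 = 572/684$), and $Hx^{1-\gamma/3}$ is majorised by $Hx^{1-\gamma/4}$ (since $\gamma > 0$). The remaining nine terms are precisely those appearing in the stated bound. The only delicate point of the argument is the calibration of $U, V, Z$ so that all three Heath-Brown compatibility conditions and the Type I/II hypotheses of Lemmas \ref{SIest} and \ref{SIIest} are simultaneously satisfied; the crucial identities $UZ^2 \asymp P_1 \asymp V^3$ make the Heath-Brown constraints essentially tight, which is what forces the specific numerical exponents $(38c+115)/342$ and $(56-38c)/171$ appearing in those lemmas.
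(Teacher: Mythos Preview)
Your proposal is correct and follows exactly the paper's own proof: the same choice of $U,V,Z$, the same invocation of Heath-Brown's decomposition, and the same application of Lemmas \ref{SIest} and \ref{SIIest}. You actually supply more detail than the paper does, in particular the explicit verification of the Heath-Brown compatibility conditions and the observation that the two Type~I terms $Hx^{c/9+569/684}$ and $Hx^{1-\gamma/3}$ are absorbed by the corresponding Type~II terms.
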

\begin{proof}
Take
\begin{equation}\label{UVZ}
U=2^{-10}x^\frac{56-38c}{171}\,,\quad V=2^7x^\frac{1}{3}\,,\quad Z=x^\frac{38c+115}{342}\,.
\end{equation}
According to Lemma \ref{Heath-Brown}, the sum $\Gamma_{10}(x_1)$
can be decomposed into $O\Big(\log^6x\Big)$ sums, each of which is either of Type I
\begin{equation*}
S_I=\sum\limits_{1\leq |h|\leq H}\Bigg|\mathop{\sum\limits_{M<m\le M_1}a(m)
\sum\limits_{L<l\le L_1}}_{x/2<ml\le x_1}e\big(tm^cl^c+hm^\gamma l^\gamma+kd^{-1} ml\big)\Bigg|
\end{equation*}
and
\begin{equation*}
S'_I=\sum\limits_{1\leq |h|\leq H}\Bigg|\mathop{\sum\limits_{M<m\le M_1}a(m)
\sum\limits_{L<l\le L_1}}_{x/2<ml\le x_1}e\big(tm^cl^c+hm^\gamma l^\gamma+kd^{-1}ml\big)\log l\Bigg|\,,
\end{equation*}
where
\begin{equation*}
L\ge Z\,,\quad M_1\le 2M\,,\;\;  L_1\le 2L\,,\quad  a(m)\ll \tau _5(m)\log x
\end{equation*}
or of Type II
\begin{equation*}
S_{II}=\sum\limits_{1\leq |h|\leq H}\Bigg|\mathop{\sum\limits_{M<m\le M_1}a(m)
\sum\limits_{L<l\le L_1}}_{x/2<ml\le x_1}b(l)e\big(tm^cl^c+hm^\gamma l^\gamma+kd^{-1} ml\big)\Bigg|\,,
\end{equation*}
where
\begin{equation*}
U\le L\le V\,,\quad M_1\le 2M\,,\quad L_1\le 2L\,,\quad
a(m)\ll \tau _5(m)\log x\,,\quad b(l)\ll \tau _5(l)\log x\,.
\end{equation*}
Bearing in mind  \eqref{UVZ}, Lemma \ref{SIest} and  Lemma \ref{SIIest} we establish the statement in the lemma.
\end{proof}
\begin{lemma}\label{Gamma10est3}
Let $x_1\le x$, $0<\gamma<1<c<\frac{28}{19}$ and $|t|\leq x^\delta$ with a sufficiently small $\delta>0$.
Then for the sum \eqref{Gamma10} we have
\begin{align*}
\Gamma_{10}(x_1)&\ll  x^\frac{\delta}{2}\Big(Hx^{\frac{c}{4}+\frac{7}{12}}
+H^\frac{5}{4}x^{\frac{\gamma}{4}+\frac{5}{8}}+Hx^{\frac{c}{9}+\frac{143}{171}}
+Hx^{1-\frac{\gamma}{4}}+Hx^{\frac{c}{6}+\frac{13}{18}}+H^\frac{7}{6}x^{\frac{\gamma}{6}+\frac{3}{4}}\\
&\hspace{60mm}+H^\frac{9}{8}x^\frac{5}{6}+H^\frac{7}{6}x^{\frac{\gamma}{6}-\frac{c}{18}+\frac{569}{684}}
+H^\frac{7}{8}x^{\frac{c}{8}-\frac{\gamma}{8}+\frac{5}{6}}\Big)\,.
\end{align*}
\end{lemma}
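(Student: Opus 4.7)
The plan is to reduce Lemma \ref{Gamma10est3} to its two predecessors by splitting the hypothesis $|t|\leq x^\delta$ into two complementary sub-ranges, $|t|\leq x^{\gamma-c-\delta}$ and $x^{\gamma-c-\delta}\leq|t|\leq x^\delta$, then applying Lemma \ref{Gamma10est1} to the first sub-range and Lemma \ref{Gamma10est2} to the second. Since on each sub-range the corresponding lemma applies with no further work, the final estimate for $\Gamma_{10}(x_1)$ will simply be the maximum (equivalently, the sum) of the two bounds.

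The only substantive task will then be the bookkeeping check that every summand produced by Lemma \ref{Gamma10est1} and every summand produced by Lemma \ref{Gamma10est2} is dominated by one of the nine terms listed in Lemma \ref{Gamma10est3}. On the small-$|t|$ side, the terms $H^{7/6}x^{\gamma/6+3/4}$ and $H^{5/4}x^{\gamma/4+5/8}$ appear in the target verbatim, the term $H^{3/4}x^{1-\gamma/4}$ is absorbed by the target term $Hx^{1-\gamma/4}$ using $H\geq 1$, and $Hx^{22/25}$ is absorbed by $Hx^{c/9+143/171}$, since $c>1$ forces $c/9+143/171>1/9+143/171=18/19>22/25$. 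On the large-$|t|$ side, seven of the nine summands of Lemma \ref{Gamma10est2} already appear verbatim, and the remaining two, $H^{5/4}x^{\gamma/4+7/12}$ and $H^{7/6}x^{\gamma/6+13/18}$, are dominated by $H^{5/4}x^{\gamma/4+5/8}$ and $H^{7/6}x^{\gamma/6+3/4}$ respectively, because $7/12<5/8$ and $13/18<3/4$.

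The argument contains no genuine analytical obstacle; all the hard work has already been carried out in Lemmas \ref{Gamma10est1} and \ref{Gamma10est2}. The only mild point of care is to verify explicitly that the small-$|t|$ bound inherited from Lemma \ref{Gamma10est1} does not introduce any exponent falling outside the nine listed terms of the target, and the absorptions above confirm this in the two non-trivial cases. Once these inclusions are recorded, Lemma \ref{Gamma10est3} follows at once by combining the two sub-range estimates.
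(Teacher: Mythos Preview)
Your proposal is correct and follows precisely the paper's own approach: the paper's proof consists of the single sentence ``It follows immediately from Lemma \ref{Gamma10est1} and Lemma \ref{Gamma10est2}.'' You have simply supplied the term-by-term bookkeeping that the paper leaves implicit, and all of your absorptions check out.
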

\begin{proof}
It follows immediately from Lemma \ref{Gamma10est1} and Lemma \ref{Gamma10est2}.
\end{proof}
Now \eqref{Gamma6est2}, \eqref{Gamma9est1}, \eqref{Gamma10} and Lemma \ref{Gamma10est3} imply
\begin{align}\label{Gamma6est3}
\Gamma_6(x)&\ll  x^\frac{\delta}{2}\Big(Hx^{\frac{c}{4}+\gamma-\frac{5}{12}}
+H^\frac{5}{4}x^{\frac{5\gamma}{4}-\frac{3}{8}}+Hx^{\frac{c}{9}+\gamma-\frac{28}{171}}
+Hx^\frac{3\gamma}{4}+Hx^{\frac{c}{6}+\gamma-\frac{5}{18}}+H^\frac{7}{6}x^{\frac{7\gamma}{6}-\frac{1}{4}}\nonumber\\
&\hspace{54mm}+H^\frac{9}{8}x^{\gamma-\frac{1}{6}}+H^\frac{7}{6}x^{\frac{7\gamma}{6}-\frac{c}{18}-\frac{115}{684}}
+H^\frac{7}{8}x^{\frac{c}{8}+\frac{7\gamma}{8}-\frac{1}{6}}\Big)\,.
\end{align}

\vspace{20mm}

\textbf{Upper bound for} $\mathbf{\Gamma_7(x)}$

By \eqref{Siegel-Walfisz}, \eqref{bh} and \eqref{Gamma7} we deduce
\begin{align}\label{Gamma7est1}
\Gamma_7(x)&\ll |b_0|\sum\limits_{x/2<n\leq x\atop{n\equiv a\,( d)}}\Lambda(n)
+\sum\limits_{1\leq|h|\leq H}|b(h)|\Bigg|\sum\limits_{x/2<n\leq x\atop{n\equiv a\,( d)}}\Lambda(n)e(-hn^\gamma)\Bigg|\nonumber\\
&\ll\frac{x}{H\varphi(d)}
+\frac{1}{H}\sum\limits_{1\leq|h|\leq H}\Bigg|\sum\limits_{x/2<n\leq x\atop{n\equiv a\,( d)}}\Lambda(n)e(-hn^\gamma)\Bigg|\nonumber\\
&\ll H^{-1}x+H^{-1}\Gamma_{11}(x)\,,
\end{align}
where
\begin{equation*}
\Gamma_{11}(x)=\sum\limits_{1\leq|h|\leq H}\Bigg|\sum\limits_{x/2<n\leq x\atop{n\equiv a\,( d)}}\Lambda(n)e(-hn^\gamma)\Bigg|\,.
\end{equation*}
Estimating the sum $\Gamma_{11}$(x) as in (\cite{Guo2}, (3.7)) we obtain
\begin{equation}\label{Gamma11est1}
\Gamma_{11}(x)\ll x^\frac{\delta}{2}\Big(H^\frac{7}{6} x^{\frac{\gamma}{6}+\frac{3}{4}}
+H^\frac{5}{4} x^{\frac{\gamma}{4}+\frac{5}{8}}+H^\frac{3}{4} x^{1-\frac{\gamma}{4}}+ H x^\frac{22}{25}\Big)\,.
\end{equation}
Now  \eqref{Gamma7est1} and \eqref{Gamma11est1} give us
\begin{equation}\label{Gamma7est2}
\Gamma_7(x)\ll x^\frac{\delta}{2}\Big(H^{-1}x+H^\frac{1}{6} x^{\frac{\gamma}{6}+\frac{3}{4}}
+H^\frac{1}{4} x^{\frac{\gamma}{4}+\frac{5}{8}}+H^{-\frac{1}{4}} x^{1-\frac{\gamma}{4}}+x^\frac{22}{25}\Big)\,.
\end{equation}
\textbf{Upper bound for} $\mathbf{\Gamma_8(x)}$

In the same way for the sum defined by \eqref{Gamma8} we get
\begin{equation}\label{Gamma8est1}
\Gamma_8(x)\ll x^\frac{\delta}{2}\Big(H^{-1}x+H^\frac{1}{6} x^{\frac{\gamma}{6}+\frac{3}{4}}
+H^\frac{1}{4} x^{\frac{\gamma}{4}+\frac{5}{8}}+H^{-\frac{1}{4}} x^{1-\frac{\gamma}{4}}+x^\frac{22}{25}\Big)\,.
\end{equation}
From \eqref{Gamma5678}, \eqref{Gamma6est3}, \eqref{Gamma7est2} and \eqref{Gamma8est1} we derive
\begin{align}\label{Gamma5est1}
\Gamma_5(x)&\ll x^\frac{\delta}{2}\Big(Hx^{\frac{c}{4}+\gamma-\frac{5}{12}}
+H^\frac{5}{4}x^{\frac{5\gamma}{4}-\frac{3}{8}}+Hx^{\frac{c}{9}+\gamma-\frac{28}{171}}
+Hx^\frac{3\gamma}{4}+Hx^{\frac{c}{6}+\gamma-\frac{5}{18}}+H^\frac{7}{6}x^{\frac{7\gamma}{6}-\frac{1}{4}}\nonumber\\
&+H^\frac{9}{8}x^{\gamma-\frac{1}{6}}+H^\frac{7}{6}x^{\frac{7\gamma}{6}-\frac{c}{18}-\frac{115}{684}}
+H^\frac{7}{8}x^{\frac{c}{8}+\frac{7\gamma}{8}-\frac{1}{6}}+H^\frac{1}{6} x^{\frac{\gamma}{6}+\frac{3}{4}}
+H^\frac{1}{4} x^{\frac{\gamma}{4}+\frac{5}{8}}+H^{-\frac{1}{4}} x^{1-\frac{\gamma}{4}}\nonumber\\
&\hspace{105mm}+H^{-1}x+x^\frac{22}{25}\Big)\,.
\end{align}
Taking into account that \eqref{Gamma5est1} holds for any real $H\geq1$ and using Lemma \ref{Optimization} we write
\begin{align}\label{Gamma5est2}
\Gamma_5(x)&\ll x^\frac{\delta}{2}\Big(x^{\frac{c}{4}+\gamma-\frac{5}{12}}
+x^{\frac{5\gamma}{4}-\frac{3}{8}}+x^{\frac{c}{9}+\gamma-\frac{28}{171}}
+x^\frac{3\gamma}{4}+x^{\frac{c}{6}+\gamma-\frac{5}{18}}+x^{\frac{7\gamma}{6}-\frac{1}{4}}+x^{\gamma-\frac{1}{6}}\nonumber\\
&+x^{\frac{7\gamma}{6}-\frac{c}{18}-\frac{115}{684}}
+x^{\frac{c}{8}+\frac{7\gamma}{8}-\frac{1}{6}}+x^{\frac{\gamma}{6}+\frac{3}{4}}
+x^{\frac{\gamma}{4}+\frac{5}{8}}+x^\frac{22}{25}+x^{\frac{c}{20}+\frac{43}{60}}+x^\frac{37}{48}+x^{\frac{c}{45}+\frac{656}{855}}\nonumber\\
&+x^{\frac{4}{5}-\frac{\gamma}{20}}+x^{\frac{c}{30}+\frac{67}{90}}+x^\frac{53}{68}+x^{\frac{26}{33}-\frac{\gamma}{44}}
+x^{\frac{20768}{26163}-\frac{c}{102}}+x^{\frac{c}{36}+\frac{20}{27}}+x^\frac{17}{20}+x^\frac{13}{16}
+x^{\frac{c}{8}+\frac{\gamma}{2}+\frac{7}{24}}\nonumber\\
&+x^{\frac{5\gamma}{9}+\frac{7}{18}}+x^{\frac{c}{18}+\frac{\gamma}{2}+\frac{143}{342}}+x^{\frac{3\gamma}{8}+\frac{1}{2}}
+x^{\frac{c}{12}+\frac{\gamma}{2}+\frac{13}{36}}+x^{\frac{7\gamma}{13}+\frac{11}{26}}+x^{\frac{8\gamma}{17}+\frac{23}{51}}
+x^{\frac{7\gamma}{13}-\frac{c}{39}+\frac{683}{1482}}\nonumber\\
&+x^{\frac{c}{15}+\frac{7\gamma}{15}+\frac{17}{45}}+x^{\frac{\gamma}{7}+\frac{11}{14}}+x^{\frac{\gamma}{5}+\frac{7}{10}}\,.
\end{align}
Using \eqref{cgamma} and \eqref{Gamma5est2} we find
\begin{equation}\label{Gamma5est3}
\Gamma_5(x)\ll x^{\frac{c}{18}+\frac{\gamma}{2}+\frac{143}{342}+\frac{\delta}{2}}\,.
\end{equation}
Bearing in mind  \eqref{Gamma23}, \eqref{Gamma34}, \eqref{Gamma45} and \eqref{Gamma5est3} we deduce
\begin{equation}\label{Gamma2est1}
\Gamma_2(x)\ll x^{\frac{c}{18}+\frac{\gamma}{2}+\frac{143}{342}+\delta}\,.
\end{equation}
It is easy to see that \eqref{cgamma} and \eqref{Gamma2est1} yield
\begin{equation*}
 \Gamma_2(x)\ll x^{\gamma-\varepsilon}\,,
\end{equation*}
for some $\varepsilon>0$.

\subsection{The end of the proof}\label{Sectionfinal}
\indent

Summarizing \eqref{Gamma12}, \eqref{Gamma1est1} and \eqref{Gamma2est1}
we establish the asymptotic formula \eqref{asymptoticformula1}.

This completes the proof of  Theorem \ref{Theorem}.

\vspace{5mm}

\textbf{Acknowledgements.} Professor Victor Guo kindly sent his paper \cite{Guo2} to the author
and the author is very grateful to him.

\vskip20pt
\footnotesize
\begin{flushleft}
S. I. Dimitrov\\
Faculty of Applied Mathematics and Informatics\\
Technical University of Sofia \\
Blvd. St.Kliment Ohridski 8, \\
Sofia 1756, Bulgaria\\
e-mail: sdimitrov@tu-sofia.bg\\
\end{flushleft}

\end{document}